\documentclass{article}
\usepackage[utf8]{inputenc}
\usepackage{amsmath,amsthm,amssymb,graphicx,tikz,caption,subcaption,xcolor,nicefrac,hyperref,cleveref}
\usepackage{fullpage}
\usepackage{bm}
\usepackage{xspace}

\allowdisplaybreaks

\newtheorem{theorem}{Theorem}

\newtheorem{lemma}{Lemma}

\newtheorem{remark}{Remark}

\newcommand{\oS}{\overline{S}}

\newcommand{\N}{\mathbb{N}}
\newcommand{\R}{\mathbb{R}}

\newcommand{\eps}{\epsilon}

\newcommand{\bbC}{\mathbb{C}}
\newcommand{\bbU}{\mathbb{U}}
\newcommand{\bbUl}{\mathbb{U}_{\mathrm{local}}}

\newcommand{\bs}{\mathbf{s}}
\newcommand{\bx}{\mathbf{x}}
\newcommand{\by}{\mathbf{y}}
\newcommand{\bS}{\mathbf{S}}
\newcommand{\obs}{\overline{\bs}}
\newcommand{\obS}{\overline{\bS}}
\newcommand{\os}{\overline{s}}
\newcommand{\ox}{\overline{x}}
\newcommand{\obx}{\overline{\bx}}

\newcommand{\confv}{\mathrm{cv}}
\newcommand{\cP}{\mathcal{P}}
\newcommand{\cO}{\mathcal{O}}
\newcommand{\bbXl}{\mathbb{X}}
\newcommand{\bb}{\mathbf{b}}
\newcommand{\obb}{\overline{\bb}}
\newcommand{\ob}{\overline{b}}
\newcommand{\ov}{\overline{v}}

\newcommand{\tx}{\tilde{x}}
\newcommand{\tbx}{\tilde{\bx}}
\newcommand{\cI}{\mathcal{I}}
\newcommand{\alphaup}{\alpha^{\uparrow}}
\newcommand{\alphalo}{\alpha^{\downarrow}}
\newcommand{\gammaup}{\gamma^{\uparrow}}
\newcommand{\gammalo}{\gamma^{\downarrow}}
\newcommand{\ualpha}{\underline{\alpha}}
\newcommand{\unu}{\underline{\nu}}
\newcommand{\scr}{\mathrm{scr}}
\newcommand{\argmin}{\mathrm{argmin}}

\newcommand{\Paren}[1]{\left(#1\right)}

\title{On Expansion of Random Regular Graphs:\\ Improved Lower Bounds for Small Even Degrees}
\date{\today}

\author{
Pasin Manurangsi\thanks{Email: \texttt{pasin@google.com}.} \\
Google Research
}

\begin{document}

\maketitle

\begin{abstract}
We show that a simple scoring-based tie-breaking can help improve lower bounds for the expansion (aka isoperimetric number) of random regular graphs with small even degrees. Specifically, for degrees $4, 6$ and $8$, we show that, with high probability, the expansions are at least $0.489, 1.120$ and $1.813$ respectively. 
\end{abstract}

\section{Introduction}

Let $G = (V, E)$ be any undirected unweighted graph. For every set $S, T \subseteq V$, we use $E(S, T)$ to denote the set of edges with one endpoint in $S$ and one endpoint in $T$. Recall that the expansion of a set $S$ is defined as $\iota_G(S) := \frac{|E(S, V \setminus S)|}{\min\{|S|, |\oS|\}}$. The expansion of the graph $\iota(G)$ is the minimum expansion among all $S \subseteq V$. 

In his seminal work, Bollob{\'{a}}s~\cite{Bollobas88} introduced the so-called pairing model $\cP(n, \Delta)$ for random $\Delta$-regular graphs on $n$ vertices and showed that\footnote{The bound is stated more precisely in \Cref{thm:bollobas-classic}; see also \Cref{tab:bounds_comparison}.}, as $n \to \infty$, a random graph drawn from $\cP(n, \Delta)$ has expansion bounded away from zero with high probability for any $\Delta \geq 3$.
Several subsequent works obtained improvements to this bound for small degrees. Specifically, for $\Delta = 3$, Kostochka and Melnikov~\cite{KM92} gave a lower bound of $1/4.95 \geq 0.202$. Amit and Linial~\cite{AmitL06} devised a generic improvement for all $\Delta$ using $\eps$-nets. However, the actual numerical improvement over the original bound in \cite{Bollobas88} is quite small; the authors estimated their improvement to be $\approx$0.1\% for $\Delta = 3$. Later, Lampis~\cite{Lampis12} gave a generic ``local improvement'' technique for improving the lower bounds for all degrees $\Delta \geq 4$, although some of the claims in his preprint are not formal\footnote{E.g., the preprint asserts ``bisections are again the interesting case, so we may assume that $|S| = n/2$'' without any proof.}. A recent work by Daneshgar and Shojaedin~\cite{formalized-local} formalizes this approach.

\paragraph{Our Contributions.}
Our main results are improved lower bounds for degree 4, 6 and 8: 

\begin{theorem} \label{thm:main}
Let $\Delta \in \{4, 6, 8\}$, and let $\nu^*_{\Delta}$ be such that $H_{\Delta}(\nu^*_{\Delta}/2) = 0$ where $H_{\Delta}$ is as defined in \eqref{eq:def-for-main-bound}. In particular, $\nu^*_4 = 0.4894\dots, \nu^*_6 = 1.1205\dots, \nu^*_8 = 1.8130\dots$.

Then, for any constant $\nu < \nu^*_{\Delta}$, a graph $G \sim \cP(n, \Delta)$ satisfies $\iota(G) > \nu$ asymptotically almost surely.
\end{theorem}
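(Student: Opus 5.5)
We will use a first-moment argument in the pairing model, sharpened by restricting attention to sets that are ``locally optimal'' under a scoring-based tie-breaking rule. The plan has four steps.

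\textbf{Step 1: reduce to locally stable witnesses.} Suppose $\iota(G)\le\nu$. Among all $S$ with $|S|\le n/2$ and $|E(S,\oS)|\le \nu|S|$, choose one minimizing a potential. The primary term is $|E(S,\oS)|-\nu|S|$. Secondary tie-breaks are a score (for example, the number of vertices in $S$ with exactly $\Delta/2$ neighbours on each side, which is the ambiguous case for even $\Delta$) and then $|S|$.

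Minimality gives local constraints on the witness:
\begin{itemize}
\item moving a vertex $v\in S$ with more than $\Delta/2$ neighbours in $\oS$ to $\oS$ would decrease the potential, so no vertex of $S$ is ``strictly wrong'';
\item the same holds for $\oS$, up to boundary effects from the constraint $|S|\le n/2$;
\item the tie-breaking score constrains the balanced vertices, i.e.\ those with exactly $\Delta/2$ neighbours on each side. Moving one such vertex changes the cut by $0$ but changes $|S|$, so the tie-break pins down which side such vertices may occupy, and possibly forbids adjacent pairs of balanced vertices. This is where even degree is used.
\end{itemize}
Small sets ($|S|\le\delta n$) are handled separately by the standard argument that small sets in $\cP(n,\Delta)$ expand well, following \Cref{thm:bollobas-classic}.

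\textbf{Step 2: count configurations.} For $|S|=\alpha n$ with $\alpha\in[\delta,1/2]$, count pairings in which $S$ has cut size $\le\nu\alpha n$ and satisfies the local constraints. Parametrize by the fraction of vertices of $S$ (resp.\ $\oS$) having each number $i$ of cross edges, with $i\le\Delta/2$ by Step 1 and balanced vertices restricted by the tie-break. The count is a product of three factors:
\begin{itemize}
\item multinomial choices of the degree profiles;
\item the number of perfect matchings within $S$ and within $\oS$ on the internal half-edges;
\item a factorial for matching the cross half-edges.
\end{itemize}
We divide by the total number of pairings $(\Delta n-1)!!$. By Stirling this gives $\Pr\le \exp(n\cdot F(\alpha,\text{profile},\nu)+o(n))$. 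Since the number of profiles is polynomial, it suffices to show $\max F<0$.

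\textbf{Step 3: optimize.} Maximizing over the profiles is a concave or Lagrangian problem whose solution has a Poisson/binomial-tilted form. This yields a closed-form rate $H_\Delta$ defined in \eqref{eq:def-for-main-bound}. We then show that the worst case is $\alpha=1/2$, the bisection case. This makes the parameter $\nu/2$ natural, since the cut equals $\nu\alpha n=\nu n/2$. Finally we show that $H_\Delta(\nu/2)<0$ for $\nu<\nu^*_\Delta$, using monotonicity of $H_\Delta$ in the cut parameter.

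\textbf{Main obstacle.} The hardest step is proving rigorously that $\alpha=1/2$ maximizes the exponent, i.e.\ justifying the bisection reduction that Lampis left informal. We would first try to show that the optimized exponent is increasing in $\alpha$ on $[\delta,1/2]$ for fixed cut density, by differentiating the Lagrangian expression. If a clean monotonicity proof fails, the fallback is a rigorous numerical check over a fine grid of $\alpha$, using Lipschitz bounds on $F$. This would be combined with a verification of the sign of $H_\Delta$ near $\nu^*_\Delta$ via interval arithmetic for $\Delta\in\{4,6,8\}$.
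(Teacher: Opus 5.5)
\textbf{There is a genuine gap, and it sits exactly at $\alpha=1/2$, the case that determines $\nu^*_\Delta$.}

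The function $H_\Delta$ in \eqref{eq:def-for-main-bound} puts weight $b_{\Delta/2}=\binom{\Delta}{\Delta/2}/2$ on balanced vertices. Nothing in your Steps 1--2 produces that factor $1/2$.
\begin{itemize}
\item At $|S|=n/2$ no single-vertex move is admissible. Moving a balanced vertex of $\oS$ into $S$ leaves the family $|S|\le n/2$. Moving one out of $S$ raises your potential $|E(S,\oS)|-\nu|S|$ by $\nu$. So balanced vertices survive on both sides.
\item Your claim that the tie-break ``pins down which side such vertices may occupy'' therefore holds only for $|S|<n/2$. There it already follows from the primary potential.
\item At the bisection, the only moves preserving both the cut and $|S|$ are swaps of a balanced $v\in S$ with a non-adjacent balanced $\ov\in\oS$. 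These change the cross-degrees of the neighbours of $v$ and $\ov$. So a deterministic secondary key such as ``number of balanced vertices in $S$'' gives no constraint expressible through the configuration vector.
\end{itemize}
With only the constraints you actually obtain (cross-degrees at most $\Delta/2$ on both sides, balanced vertices allowed on both sides), the symmetric optimization gives $2\Phi^*(1/2,\gamma,\bb)$ with $b_{\Delta/2}=\binom{\Delta}{\Delta/2}$. Its zero is about $\nu\approx 0.445$ for $\Delta=4$, essentially the previous bound in \Cref{tab:bounds_comparison}, not $0.4894$. So Step 3 does not ``yield'' $H_\Delta$.

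\textbf{The missing idea is a \emph{random} score.} Take a uniform permutation $\pi$ independent of $G$, and break ties among optimal sets by $(|S|,\sum_{v\in S}\pi(v))$; this can also serve as the secondary key on top of your potential.
\begin{itemize}
\item Let $\ov^*$ be the $\pi$-smallest balanced vertex of $\oS$. Swapping it with any balanced $v\in S$ not adjacent to it preserves the cut and $|S|$. Minimality then forces $\pi(v)<\pi(\ov^*)$.
\item Hence all but at most $\Delta$ balanced vertices of $S$ precede every balanced vertex of $\oS$ in $\pi$. This gives $\Pr_\pi[S\in\cO_\pi(G)]\le n^{2\Delta}/\binom{s_{\Delta/2}+\os_{\Delta/2}}{s_{\Delta/2}}$ (\Cref{lem:lex}).
\item Multiplied into the first moment, this turns $s_{\Delta/2}!\,\os_{\Delta/2}!$ into $(s_{\Delta/2}+\os_{\Delta/2})!$. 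Convexity with $\tbx=(\bx+\obx)/2$ then yields exactly $b_{\Delta/2}=\binom{\Delta}{\Delta/2}/2$.
\end{itemize}

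\textbf{Two smaller points.}
\begin{itemize}
\item Monotonicity of $H_\Delta$ needs an actual argument. The paper uses the envelope formula $H'(\gamma)=\ln\frac{\gamma}{\Delta/2-\gamma}-2\ln z^*$ together with the explicit root bound of \Cref{lem:root-bound}.
\item The paper never proves analytically that $\alpha=1/2$ is the worst case. It certifies $\alpha<1/2$ by a corner-based discretization, which works because $\ox_{\Delta/2}=0$ there creates a strict gap below the symmetric value. Your ``fallback'' is therefore the intended route.
\end{itemize}
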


We remark that, since $H_{\Delta}$ is strictly increasing (\Cref{lem:root-bound}) and $H_{\Delta}(\gamma)$ can be computed (to arbitrary precision) efficiently (\Cref{thm:min-entropy-main}), we can compute $\nu^*_\Delta$ (to arbitrary precision) using binary search.

The numerical comparison between our lower bounds and previously known lower bounds are given in \Cref{tab:bounds_comparison}. As can be seen, our improvements over previous bounds are significant: While previous improvements are less than $1.5\%$ over that of the original work by Bollobas~\cite{Bollobas88}, ours are more than $5\%$ in all cases. In fact, our improvement for $\Delta = 4$ is over $10\%$ compared to \cite{Bollobas88}.

\begin{table}[h!]
\centering % To center the table on the page
\begin{tabular}{|c|c|c|c|c|} 
\hline
\textbf{Degree $\Delta$} & \textbf{\cite{Bollobas88}} & \textbf{\cite{AmitL06}} & \textbf{\cite{formalized-local}} & \textbf{Our Bounds} \\ 
\hline
4 & 0.4401 & 0.4403 & 0.4452 & 0.4894 \\
6 & 1.0437 & 1.0438 & 1.0584 & 1.1205 \\
8 & 1.7160 & 1.7161 & 1.7297 & 1.8130 \\
%10 & 2.430 & 2.465 & 2.543 \\
%12 & 3.168 & 3.216 & 3.301 \\
%14 & 3.934 & 3.990 & 4.078 \\
\hline
\end{tabular}
\caption{Comparison of previous and our new lower bounds for expansion of regular random graphs of different degrees. The reported numbers from previous works are from Table 2 in \cite{formalized-local}. We also remark that, while \cite{formalized-local} formalizes techniques from \cite{Lampis12}, the numbers they claim are not exactly the same. We only report those in \cite{formalized-local} since they are formally proved.}
\label{tab:bounds_comparison}
\end{table}

\paragraph{Overview of Techniques.} Before we can discuss our proof overview, we recall previous work. \cite{Bollobas88} use a simple strategy of taking the union bound over all sets $S \subseteq [n]$ and calculate the probability that it violates the expansion condition, i.e. $\iota_G(S) \geq \nu$. An improvement in \cite{Lampis12,formalized-local} comes from an observation that the latter probability can be strengthened further by only including $S$ that has the minimum expansion. In a sense, these works apply a \emph{tie-breaking} rule and only count subsets that are the smallest under the rule. Generally, bounding the probability that $S$ has minimum expansion is challenging. Thus, \cite{Lampis12} only keeps $S$ that cannot be ``locally'' improved by \emph{swapping} one vertex from $S$ to $\oS$. This rules out certain ``degree profile'' (formalized as \emph{configuration vector} in \Cref{subsec:prelim-config}). E.g., it is not possible for both $S$ and $\oS$ to have vertices with strictly more than half of their neighbors outside of the set. As otherwise, we can simply swap them and reduce the expansion. This is generally the rough ideas from previous work that we will build on. 

Our work introduces the following new tie-breaking rules: %In particular, we make two specific tie-breaking in our proofs.
\begin{itemize}
\item \emph{Scoring-Based Tie-Breaking:} The most crucial addition to our proof is a scoring-based tie-breaking: We give random scores to vertices and, if there are multiple $S$ with the same expansion, only counts those that have the minimum total score. The main effect of this can be seen as follows: Local improvement strategies of \cite{Lampis12} still allow arbitrary number of vertices in $S, \oS$ to have exactly half of their neighbors outside of the set. However, we show that, if this happens, the probability that $S$ has the minimum score is tiny (\Cref{lem:lex}). This is the main source of improvement in the bounds. 
\item \emph{Local Improvement by Moving Single Vertex:} Another observation we make is that we can also make local improvements by \emph{moving a single vertex} from $S$ to $\oS$ (and vice versa). This seemingly innocuous observation leads to a more restricted degree profile than that of \cite{Lampis12,formalized-local} (see \Cref{sec:local-improvement}). In particular, for the unbalanced case where $|S| < |\oS|$, we can show that no vertex in $\oS$ can even have exactly half of its neighbors outside of the set. This actually affords us a numerical gap between the balanced case and the unbalanced case, which ultimately allows us to use a discretization approach to numerically verify the latter (\Cref{subsec:asym}). 
\end{itemize}

\paragraph{Other Related Work.}
A number of works have also studied the size of the minimum bisection (aka \emph{bisection width}) of random graphs. In particular, both upper and lower bounds have been derived for different degrees (e.g.~\cite{DiazSW07,LichevM23}). We note that upper bounds on the bisection width immediately imply upper bounds on the expansion of the graph, but the lower bounds do not. Meanwhile, our lower bounds imply lower bounds on the bisection width as well.
%Thus, the latter works are not directly comparable to our (lower) bounds.
%
Much work has also been done for the related questions for \emph{vertex} expansion; see e.g. \cite{DiazDSS24} and references therein.
\section{Preliminaries}
In this section, we recall a few preliminaries that will useful throughout.
For any positive integer $K$, let $[K] := \{1, \dots, K\}$ and $[K]_0 := \{0, \dots, K\}$. Recall also that $K!! := \prod_{i=0}^{\lfloor (K-1)/2 \rfloor} (K - 2i)$.

\subsection{Random Regular Graphs and Expansion}

Throughout this work, we assume for simplicity that both $n$ and $\Delta$ are even positive integers. We always consider the asymptotic setting where $\Delta$ is fixed and $n \to \infty$, and we will not state this explicitly henceforth.

We use the so-called pairing model\footnote{We remark that our high probability results also hold for $\Delta$-regular graphs that are drawn uniformly at random from the set of all $\Delta$-regular graphs, due to a well known connection between two models~\cite{Wormald-models}.} of \cite{Bollobas88}. Namely, we consider a distribution $\cP_{n, \Delta}$ of a random $\Delta$-regular graph on $n$ vertices generated as follows. First, start by taking a random perfect matching over $[n] \times [\Delta]$; such a matching is referred to as a \emph{configuration}. Then, for each $i \in [n]$, we merge all $\Delta$ elements $(i, 1), \dots, (i, \Delta)$ to create our final graph $G = ([n], E)$. %When we need to be precise, we write $\bbC$ to denote a configuration and $G(\bbC)$ to denote the graph resulting from the realization.
The original work of \cite{Bollobas88} gives the following lower bound on the expansion of the graph. (See \Cref{tab:bounds_comparison} for the numerical values for $\Delta \in \{4, 6, 8\}$.)

\begin{theorem}[\cite{Bollobas88}] \label{thm:bollobas-classic}
Let $\eta_{\Delta} \in (0, 1)$ be the solution to the following equation
\begin{align*}
(1 - \eta_\Delta) \log_2(1 - \eta_{\Delta}) + (1 + \eta_\Delta) \log_2(1 + \eta_{\Delta}) = \frac{4}{\Delta}
\end{align*}
and let $\unu_{\Delta} > 0$ be such that $\unu_{\Delta} < (1 - \eta_{\Delta})\frac{\Delta}{2}$. Then, a graph $G \sim \cP(n, \Delta)$ satisfies $\iota(G) > \unu_{\Delta}$ a.a.s.
\end{theorem}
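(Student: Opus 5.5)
We prove \Cref{thm:bollobas-classic} with a first-moment (union bound) argument over cuts in the pairing model. Call a set $S \subseteq [n]$ with $|S| = s \le n/2$ \emph{bad} if $|E(S,\oS)| \le \nu s$, where $\nu := \unu_\Delta$. We bound the expected number of bad sets and show it is $o(1)$; Markov's inequality then finishes the proof. Fix $S$ with $|S| = s = \alpha n$, and fix the number of crossing edges $e = |E(S,\oS)|$. Counting configurations gives
\begin{align*}
\Pr[|E(S,\oS)| = e] = \frac{\binom{\Delta s}{e}\binom{\Delta(n-s)}{e}\, e!\,(\Delta s - e - 1)!!\,(\Delta(n-s)-e-1)!!}{(\Delta n - 1)!!},
\end{align*}
with the convention that the probability is zero when $\Delta s - e$ is odd. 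Summing over the $\binom{n}{s}$ choices of $S$ and the at most $\nu s + 1$ admissible values of $e$, the expected number of bad sets is at most
\begin{align*}
\sum_{s \le n/2}\ \sum_{e \le \nu s} \binom{n}{s}\Pr[|E(S,\oS)| = e].
\end{align*}

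\textbf{Small sets.} For $s \le \delta n$ with a small constant $\delta$, we use the crude bound
\begin{align*}
\Pr[|E(S,\oS)| = e] \le \binom{\Delta s}{e}\Bigl(\tfrac{\Delta s}{\Delta n - \Delta s}\Bigr)^{(\Delta s - e)/2}.
\end{align*}
This holds because the $\Delta s - e$ points of $S$ that are matched inside $S$ must pair among themselves. Since $\nu < \Delta/2 < \Delta - 2$ for $\Delta \ge 4$, the exponent $(\Delta s - e)/2$ exceeds $s$ by a linear amount. Hence the term is at most $(C s/n)^{c s}$ for some constant $c > 1$. Summing over $1 \le s \le \delta n$ gives $o(1)$.

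\textbf{Linear-size sets.} For $s = \alpha n$ with $\delta \le \alpha \le 1/2$, write $e = \beta n$ and apply Stirling's formula. The term becomes $\exp(n\, F(\alpha,\beta) + O(\log n))$ for an explicit entropy functional $F$. The key monotonicity fact is that $F(\alpha,\cdot)$ is increasing in $\beta$ below the typical value $\Delta\alpha(1-\alpha)n$. Since $\nu \alpha \le \nu/2 < \Delta\alpha(1-\alpha)$ in this range, the worst case is $\beta = \nu\alpha$. It then suffices to show $\max_{\alpha \in [\delta,1/2]} F(\alpha,\nu\alpha) < 0$. I expect this to be the main obstacle. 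The plan is to show the maximum is attained at $\alpha = 1/2$, the bisection case, by checking that $\frac{d}{d\alpha}F(\alpha,\nu\alpha) \ge 0$ on the range. This is a direct but somewhat tedious calculus estimate, and it uses that $\nu$ lies below the typical cut density.

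\textbf{The bisection case.} At $\alpha = 1/2$ and $\beta = \nu/2$, write $\nu = (1-\eta)\Delta/2$. Then the crossing edges use a fraction $(1-\eta)/2$ of the $\Delta n/2$ half-edges on each side. Simplifying the Stirling expression gives
\begin{align*}
F = \ln 2 - \frac{\Delta}{4}\Bigl[(1-\eta)\ln(1-\eta) + (1+\eta)\ln(1+\eta)\Bigr],
\end{align*}
after collecting the binomial, factorial and double-factorial contributions. The bracketed function is increasing in $\eta \in (0,1)$. Therefore $F < 0$ exactly when $(1-\eta)\log_2(1-\eta) + (1+\eta)\log_2(1+\eta) > 4/\Delta$, that is, when $\eta > \eta_\Delta$, which is equivalent to $\nu < (1-\eta_\Delta)\Delta/2$. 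By continuity and compactness we get a uniform bound $F \le -c < 0$ over $[\delta,1/2]$. The polynomially many $(s,e)$ terms are absorbed by $e^{-cn}$. Hence the expected number of bad sets is $o(1)$, and $\iota(G) > \unu_\Delta$ a.a.s.
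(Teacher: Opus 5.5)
The paper quotes this theorem from Bollobás without proof, so there is no argument to compare against. Judged on its own, your outline (first moment, small sets, Stirling for linear sets, reduction to $\beta=\nu\alpha$, the bisection formula) is sound. It fails at the step you flagged yourself. Write $g(\alpha):=F(\alpha,\nu\alpha)$. Differentiating your Stirling exponent gives
\[
g'(\alpha)=\frac{\Delta-2-\nu}{2}\ln\alpha-(\Delta-1)\ln(1-\alpha)+\frac{\Delta+\nu}{2}\ln\bigl(\Delta-(\Delta+\nu)\alpha\bigr)-\frac{\Delta-\nu}{2}\ln(\Delta-\nu)-\nu\ln\nu .
\]
Since $\Delta-2-\nu>0$ (the same fact you use for small sets), $g'(\alpha)\to-\infty$ as $\alpha\to0^+$. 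So $g$ is \emph{decreasing} on an initial interval, and the claim $g'\ge 0$ on $[\delta,1/2]$ is false for every small $\delta$.

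This is not harmless. For $\Delta=4$ and $\nu$ near $0.44$, $g$ falls from $g(0^+)=0$ to about $-0.16$ near $\alpha\approx 0.19$, and only then rises to $g(1/2)\approx 0$. Your crude small-set bound, even with $\binom{\Delta s}{e}$ estimated sharply, already has a positive exponent by $\alpha=0.05$. The two regimes therefore do not meet, and an interval of $\alpha$ is covered by neither argument.

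The conclusion $\max_{[\delta,1/2]} g<0$ is still true, but it needs a shape argument rather than monotonicity. Start from
\[
g''(\alpha)=\frac{\Delta-2-\nu}{2\alpha}+\frac{\Delta-1}{1-\alpha}-\frac{(\Delta+\nu)^2}{2\bigl(\Delta-(\Delta+\nu)\alpha\bigr)}.
\]
Multiplying by $\alpha(1-\alpha)\bigl(\Delta-(\Delta+\nu)\alpha\bigr)>0$, the $\alpha^2$ terms cancel. What remains is the affine function $\frac{\Delta(\Delta-2-\nu)}{2}-\frac{(\Delta-2)(\Delta+\nu)}{2}\alpha$, which is positive at $\alpha=0$.

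Hence $g''$ changes sign at most once, from $+$ to $-$, so $g'$ first increases and then decreases. Combined with $g'(0^+)=-\infty$ and $g'(1/2)=\nu\ln\frac{\Delta-\nu}{\nu}>0$, this gives a unique zero of $g'$: $g$ decreases and then increases on $(0,1/2]$. Therefore $\max_{[\delta,1/2]}g=\max\{g(\delta),g(1/2)\}$. Here $g(\delta)<0$ because $g(0^+)=0$ and $g$ decreases initially, and your bisection computation handles $g(1/2)$. For $\Delta\in\{4,6,8\}$ the affine factor is positive on all of $(0,1/2]$, so $g$ is convex and simply $g(\alpha)\le 2\alpha\, g(1/2)$.

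Two minor slips:
\begin{itemize}
\item The inequality $\nu/2<\Delta\alpha(1-\alpha)$ is false for small $\alpha$. What you actually need is just $\nu<\Delta/2\le\Delta(1-\alpha)$.
\item After including the factor $\binom{n}{s}\le (en/s)^s$, your small-set term is $\bigl(C(s/n)^{c-1}\bigr)^s$ with $c-1>0$, not $(Cs/n)^{cs}$ with $c>1$. It still sums to $o(1)$.
\end{itemize}
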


It will be convenient to consider only sets $S$ that are sufficiently large, e.g. for convenience of applying Stirling's approximation.
To do so, for $u \in (0, 1/2]$, we additionally define $\iota_{\leq u}(G) = \min_{|S| \leq u \cdot n} \iota_G(S).$
Note that $\iota(G)$ coincides with $\iota_{1/2}(G)$.

The following lemma asserts that small subsets have large expansions a.a.s. This follows from \cite{KolesnikW14}. In particular, \cite[Lemma 12]{KolesnikW14} gives a lower bound $\hat{A}_{\Delta}(u)$ on the expansion $\iota_{\leq u}(G)$ for $G \sim \cP(n, \Delta)$. The lower bound $\hat{A}_{\Delta}(u)$ is efficiently computable and, for $u = 0.1$, we evaluate it to $\hat{A}_{4}(u) \geq 0.933, \hat{A}_{6}(u) \geq 2.163, \hat{A}_{6}(u) \geq 3.507$. Below we state a very loose bound which is nevertheless sufficient for us. %We note however that a standard technique (directly from \cite{Bollobas88}) is already sufficient to prove such a statement, but with perhaps a worse bound on $\eps$. (For our proof, any absolute values $\eps > 0$ suffice.)

\begin{theorem}[\cite{KolesnikW14}] \label{thm:small-set}
For $\Delta \in \{4, 6, 8\}$ and $\ualpha = 0.1$, $G \sim \cP(n, \Delta)$ satisfies $\iota_{\leq \ualpha}(G) \geq \frac{3\Delta}{8}-1$ a.a.s. 
\end{theorem}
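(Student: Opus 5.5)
We prove \Cref{thm:small-set} by a first-moment (union bound) argument in the pairing model, as in \cite{Bollobas88}. This reproves the cited bound directly rather than evaluating $\hat{A}_\Delta(u)$. Set $\nu_\Delta := \frac{3\Delta}{8}-1$; for $\Delta=4,6,8$ this gives $\nu_\Delta = 1/2,\ 5/4,\ 2$. Fix sizes $s=|S| \le \ualpha n$ and let $k = |E(S,\oS)|$. The event $\iota_{\le \ualpha}(G) < \nu_\Delta$ means that some $S$ with $s \le \ualpha n$ has $k < \nu_\Delta s$. We bound the expected number of such pairs $(S,k)$ and show it is $o(1)$.

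For a fixed $S$ of size $s$ and a fixed $k$ with $\Delta s - k$ even, we count configurations in which exactly $k$ of the $\Delta s$ half-edges of $S$ are matched outside $S$. We choose which $k$ half-edges of $S$ go out and which $k$ half-edges of $\oS$ receive them. The rest of $S$ is matched internally, and the rest of $\oS$ is matched internally. This gives
\[
\Pr[|E(S,\oS)|=k] = \binom{\Delta s}{k}\binom{\Delta(n-s)}{k}\, k!\,(\Delta s-k-1)!!\,(\Delta(n-s)-k-1)!! \,/\, (\Delta n-1)!! .
\]
Multiplying by $\binom{n}{s}$ and summing over $k < \nu_\Delta s$ gives the expected count.

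We then split into two ranges of $s$.

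\textbf{Small $s$.} For $s \le \delta n$ with a small constant $\delta$, a crude estimate suffices. If $k < \nu_\Delta s \le (\Delta/2 - 1)s$ (which holds because $\nu_\Delta \le \Delta/2-1$ for our $\Delta$), then $S$ spans more than $(1+\epsilon')s$ edges for some $\epsilon' > 0$. The standard estimate $\binom{n}{s}\binom{\binom{s}{2}}{(1+\epsilon')s}(c/n)^{(1+\epsilon')s}$ is then $\sum_s (Cs/n)^{\epsilon' s}$. This tends to $0$ when summed over $1\le s\le \delta n$. The sum is dominated by $s=O(1)$ terms, which are $O(n^{-\epsilon'})$.

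\textbf{Linear $s$.} For $\delta n \le s \le \ualpha n$, write $s = \alpha n$ and $k = \beta n$. Stirling's formula gives the expected count as $\exp(n f_\Delta(\alpha,\beta) + O(\log n))$, where $f_\Delta$ is an explicit combination of entropy-type terms. Its rough form is
\[
f_\Delta = H(\alpha) + \Delta\alpha H(\beta/\Delta\alpha) + \Delta(1-\alpha)H\bigl(\beta/(\Delta(1-\alpha))\bigr) + \text{(double-factorial terms)} - \tfrac{\Delta}{2}\log\cdots,
\]
with $H$ the binary entropy function. For $\beta \le \nu_\Delta\alpha$, the function $f_\Delta$ is increasing in $\beta$, since we are below the typical value $\beta \approx \Delta\alpha(1-\alpha)$. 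So it suffices to check the single inequality $g_\Delta(\alpha) := f_\Delta(\alpha,\nu_\Delta\alpha) < 0$ for all $\alpha \in [\delta, 0.1]$. Since $g_\Delta$ is continuous on a compact interval, strict negativity gives a uniform gap $-c$. The polynomially many choices of $(s,k)$ then contribute $n^{O(1)}e^{-cn} = o(1)$.

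The main obstacle is verifying $g_\Delta(\alpha)<0$ on $[\delta,0.1]$ for each of the three degrees. Near $\alpha\to 0$, $g_\Delta(\alpha) \approx \alpha\bigl(\log(1/\alpha)(1-\tfrac{\Delta-\nu_\Delta}{2}) + O(1)\bigr)$, which is negative because $\frac{\Delta-\nu_\Delta}{2} > 1$. At $\alpha = 0.1$, the claimed bound sits well below the computed values $\hat A_\Delta(0.1)$ (namely $0.933$, $2.163$, $3.507$ versus $0.5$, $1.25$, $2$). This slack means a numerical check will suffice. We would take a fine grid in $\alpha$ together with a Lipschitz bound on $g_\Delta$ to certify negativity between grid points. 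Alternatively, we can use the concavity of $g_\Delta$ in $\alpha$, if it can be established, to reduce the check to finitely many points. This is exactly what \cite[Lemma 12]{KolesnikW14} packages, so in the paper the statement follows immediately by quoting that lemma with $u = 0.1$ and the evaluated values of $\hat A_\Delta(0.1)$.
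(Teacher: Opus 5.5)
Your argument is correct in outline but takes a different route from the paper. The paper gives no proof of its own here. It quotes \cite[Lemma 12]{KolesnikW14}, evaluates that computable bound at $u=0.1$ (obtaining $0.933$, $2.163$, $3.507$), and records the deliberately loose $\frac{3\Delta}{8}-1$ because that is all the main proof needs. You instead reprove the bound with a self-contained Bollob\'as-style first moment. For sublinear $s$ you count internal edges, and for linear $s$ you use $\partial_\beta f_\Delta=\ln\bigl(\sqrt{(\Delta\alpha-\beta)(\Delta(1-\alpha)-\beta)}/\beta\bigr)>0$ to reduce everything to the single inequality $g_\Delta<0$. The citation is shorter. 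Your route removes the external dependence and shows why such a weak constant suffices. Your guess about what Lemma 12 packages also fits the numbers: the first-moment threshold at $\alpha=0.1$, $\Delta=4$, is about $0.933$. The cost is that you must actually verify $g_\Delta<0$, which you leave open.

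That step closes easily, but not via your concavity suggestion. Concavity cannot reduce an upper bound to finitely many point evaluations; convexity can, and $g_\Delta$ is convex. In nats, with $A=\Delta\alpha$ and $B=\Delta(1-\alpha)$,
\[
f_\Delta(\alpha,\beta)=-\alpha\ln\alpha-(1-\alpha)\ln(1-\alpha)-\beta\ln\beta+A\ln A+B\ln B-\tfrac{A-\beta}{2}\ln(A-\beta)-\tfrac{B-\beta}{2}\ln(B-\beta)-\tfrac{\Delta}{2}\ln\Delta ,
\]
and
\[
g_\Delta''(\alpha)=\frac{1}{\alpha}\Bigl(\frac{\Delta-\nu_\Delta}{2}-1\Bigr)+\frac{\Delta-1}{1-\alpha}-\frac{(\Delta+\nu_\Delta)^2}{2(\Delta-(\Delta+\nu_\Delta)\alpha)}>0
\]
on $(0,0.1]$. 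On that interval the first term is at least $7.5$, $13.75$, $20$ for $\Delta=4,6,8$, while the last is at most $2.9$, $5.0$, $7.2$. Evaluating directly gives $g_\Delta(0^+)=0$, $g_4(0.1)\approx-0.118$, $g_6(0.1)\approx-0.200$ and $g_8(0.1)\approx-0.293$. Convexity then yields $g_\Delta(\alpha)\le 10\alpha\,g_\Delta(0.1)<0$ on $(0,0.1]$. This needs no grid and no appeal to $\hat{A}_\Delta(0.1)$, which in any case only speaks to the endpoint.

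One technical fix is needed for small $s$. In the pairing model you should count internal pairs of points and bound the probability of $m$ internal edges by $\binom{\Delta s}{2m}(2m-1)!!\prod_{i<m}(\Delta n-2i-1)^{-1}$. The bound $\binom{\binom{s}{2}}{m}(c/n)^m$ does not work, because loops and multi-edges occur: for $\Delta=4$ a single vertex with two loops already violates the bound, yet your count gives zero for $s=1$. The correction changes only constants.
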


\subsection{Configuration Vector}
\label{subsec:prelim-config}

We use the notion of configuration vector from \cite{Lampis12}. For a partition $(S, \oS)$ of $[n]$ and a graph $G = ([n], E)$, the \emph{cross-degree} of $v \in S$ (resp. $v \in \oS$) is the number of neighbors of $v$ in $\oS$ (resp. in $S$). We write $\bS = (S_1, \dots, S_\Delta)$ (resp. $\obS = (\oS_1, \dots, \oS_\Delta)$) where $S_j \subseteq S$ (resp. $\oS_j \subseteq \oS$)  denotes the set of vertices in $S$ (resp. $\oS$) with cross-degree $j$. Finally, the \emph{configuration vector} of $(S, \oS)$, denoted by $\confv_G(S)$, is a pair $(\bs, \obs)$ of vectors $\bs = (s_1, \dots, s_\Delta)$ and $\obs = (\os_1, \dots, \os_\Delta)$ where $s_j = |S_j|, \os_j = |\oS_j|$.

%Namely, for a partition $(S, \oS)$ of $[n]$ and a graph $G = ([n], E)$, we write a vector $\bs = (s_1, \dots, s_\Delta)$ and $\obs = (\os_1, \dots, \os_\Delta)$ where $s_j$ (resp. $\os_j$) denote the number of vertices in $S$ (resp. $\oS$) that that has $j$ of its neighbors in $\oS$ (resp. $S$). We refer to $(\bs, \obs)$ as the \emph{configuration vector} of $(S, \oS)$ w.r.t. $G$, denoted by $\confv_G(S)$. %Furthermore, 

Let $\bbU(k, c)$ denote the set of all configuration vectors $(\bs, \obs)$ such that $\sum_{i = 0}^{\Delta} i s_i = \sum_{i = 0}^{\Delta} i \os_i = c, \sum_{i = 0}^{\Delta} s_i = k$ and $\sum_{i = 0}^{\Delta} \os_i = n - k$. The following (simple) formula will be convenient for our proofs.

\begin{lemma}[\cite{Lampis12}] \label{lem:lampis-counting}
For any fixed partition $(S, \oS)$ with $|S| = k$ and any $(\bs, \obs) \in \bbU(k, c)$, the probability over $G \sim \cP(n, \Delta)$ 
%the number of configurations $\bbC$ such 
that $\confv_{G(\bbC)}(S) = (\bs, \obs)$ is
\begin{align*}
P(\bs, \obs) := \frac{\left(k!(n - k)!\right) \cdot \left(c! (\Delta k - c)!! (\Delta(n- k) - c)!!\right)}{(\Delta n)!!} \cdot \left(\prod_{i=0}^{\Delta} \frac{1}{s_i!} \cdot \binom{\Delta}{i}^{s_i} \right) \cdot \left(\prod_{i=0}^{\Delta} \frac{1}{\os_i!} \cdot \binom{\Delta}{i}^{\os_i} \right)
\end{align*}
\end{lemma}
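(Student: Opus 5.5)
We prove \Cref{lem:lampis-counting} by counting configurations, i.e., perfect matchings of $[n]\times[\Delta]$, in which the fixed set $S$ has configuration vector $(\bs,\obs)$. We then divide by the total number of configurations, which is $(\Delta n - 1)!!$. (The formula as displayed has $(\Delta n)!!$ in the denominator, so part of the task is to reconcile normalizations; see below.)

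\textbf{Step 1: assign cross-degrees.} First choose which vertex of $S$ gets which cross-degree. The number of ways to partition the $k$ labelled vertices of $S$ into classes of sizes $s_0,\dots,s_\Delta$ is the multinomial $k!/\prod_i s_i!$. Similarly, $\oS$ contributes $(n-k)!/\prod_i \os_i!$.

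\textbf{Step 2: choose cross points.} For each vertex of cross-degree $i$, choose which $i$ of its $\Delta$ points are matched across the cut. This contributes $\prod_i \binom{\Delta}{i}^{s_i}\binom{\Delta}{i}^{\os_i}$. After this step, exactly $c$ points of $S$ and $c$ points of $\oS$ are designated as cross points, by the constraints defining $\bbU(k,c)$.

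\textbf{Step 3: complete the matching.} Match the $c$ cross points of $S$ bijectively to the $c$ cross points of $\oS$, in $c!$ ways. Then match the remaining $\Delta k - c$ points of $S$ among themselves, in $(\Delta k - c - 1)!!$ ways. Likewise, the remaining points of $\oS$ can be matched in $(\Delta(n-k)-c-1)!!$ ways.

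Every configuration with $\confv(S)=(\bs,\obs)$ arises exactly once from these choices. The cross-degree classes and the cross points are determined by the matching itself, and given them, the three sub-matchings are determined as well. Multiplying the counts gives the probability
\[
\frac{k!(n-k)!\,c!\,(\Delta k-c-1)!!\,(\Delta(n-k)-c-1)!!}{(\Delta n-1)!!}\prod_i\frac{\binom{\Delta}{i}^{s_i}}{s_i!}\prod_i\frac{\binom{\Delta}{i}^{\os_i}}{\os_i!}.
\]

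\textbf{Main obstacle: matching the displayed double factorials.} The only delicate point is the mismatch between $(m-1)!!$ and $m!!$. Note that $\Delta k - c$ and $\Delta(n-k)-c$ are even, because the cross points can be paired off only if their complements within each side have even size. Hence $(m-1)!! = m!/(2^{m/2}(m/2)!)$, whereas $m!! = 2^{m/2}(m/2)!$.

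The displayed formula is exactly the expression above if the paper's ``$K!!$'' for these even arguments is read as the number of perfect matchings on $K$ points; this appears to be the intended convention. Under the literal definition of $K!!$ given in the preliminaries, the displayed expression differs from the probability by the factor
\[
\frac{(\Delta k - c)!!\,(\Delta(n-k)-c)!!}{(\Delta k - c - 1)!!\,(\Delta(n-k)-c-1)!!}\cdot\frac{(\Delta n - 1)!!}{(\Delta n)!!}.
\]
This factor is $\Theta(\sqrt{n})^{O(1)}$, i.e., polynomial in $n$. It is therefore irrelevant for the exponential-rate (union-bound) uses of the formula later in the paper.

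I would state the exact count with the matching-count convention and remark on this point.
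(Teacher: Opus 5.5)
Your proof is correct. The paper gives no proof of its own and only cites \cite{Lampis12}, and your direct count is the standard argument behind that formula: choose the cross-degree classes, choose the cross points, then match across the cut in $c!$ ways and within each side. Your normalization remark is also right. With the literal definition of $K!!$ from the preliminaries, the display is off by a factor between $\Omega(n^{-1/2})$ and $O(n^{1/2})$, which is harmless inside the $2^{o(n)}$ union bound.

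One small point: evenness of $\Delta k - c$ is not automatic. Since $\Delta$ is even, for odd $c$ the event is impossible. Step~3 should then give $0$ rather than $(\Delta k - c - 1)!!$, and the formula becomes merely an upper bound, which is all the paper uses.
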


\subsection{Stirling's Approximation}
\label{subsec:stirling}

We will use Stirling's approximation to aid our proofs throughout. In particular, recall the following:
\begin{align*}
\ln n! = n(\ln n - 1)  + \Theta(1),
&& \ln n!! = \frac{n}{2}(\ln n - 1)  + \Theta(1).
\end{align*}

\subsection{Minimizing Relative Entropy Subject to Linear Constraints}

In our proofs, a type of optimization problem will show up repeatedly; we define and solve it below.

Let $T \in \N$.
For $a, c \in \R, \bb \in \R^{[T]_0}$. Consider the following optimization problem:\footnote{Throughout, we assume that $a, c > 0$ and $b_i > 0$ for all $i \in [T]_0$.}
\begin{align}
\Phi^*(a, c, \bb) &:= \max_{\by \in \R^{[T]_0}} \sum_{i \in [T]_0} y_i \ln(b_i / y_i) \label{opt-prob:entropy-min}  \\
\text{subject to }  & \sum_{i \in [T]_0} y_i = a \nonumber \\
&\sum_{i \in [T]_0} i \cdot y_i = c \nonumber \\
& y_i \geq 0 & & \forall i \in [T]_0 \nonumber
\end{align}
Note that, with appropriate normalization, this problem is simply the problem of minimizing relative entropy with respect to a mean constraint. Nevertheless, we forgo the normalization here since this generic formulation is more convenient for our subsequent applications.

The following lemma gives a simple-to-use formula for the solution to the problem together with its derivative with respect to $c$, which will become handy in our calculations later on.

\begin{theorem} \label{thm:min-entropy-main}
Consider the optimization problem \eqref{opt-prob:entropy-min} under the assumption that $c < a \cdot T$.

For every $z \in \R_+$, let $Z(z; \bb) = \sum_{i \in [T]_0} b_i z^i$. The optimal solution $\by^*$ to \eqref{opt-prob:entropy-min} is given by
\begin{align*}
    y_i^* = b_i \cdot a \cdot \frac{(z^*)^i}{Z(z^*)} & &\forall i \in [T]_0,
\end{align*}
where $z^*$ is the unique\footnote{The uniqueness of the root follows from Descartes' rule of signs since $c < a \cdot T$.} positive solution to the  equation $0 = \Paren{\sum_{i \in [T]_0} (ai - c)b_i z^i}$.

The total derivative of the optimal objective value with respect to the parameter $c$ is %equivalent to the Lagrange multiplier $\mu = -\ln z^*$, specifically:
\begin{align} \label{eq:derivative-min-rel-ent}
    \frac{d \Phi^*(a, c, \bb)}{d c} = -\ln z^*.
\end{align}
\end{theorem}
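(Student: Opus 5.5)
We will prove \Cref{thm:min-entropy-main} with Lagrange multipliers, using strict concavity of the objective to get global optimality, and then apply the envelope theorem for the derivative.

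\textbf{Concavity and existence.} The objective $\Phi(\by) = \sum_i y_i \ln(b_i/y_i)$ (with $0 \ln(b/0) := 0$) is continuous on the feasible polytope. Each summand $y_i\ln b_i - y_i\ln y_i$ is strictly concave in $y_i$, so $\Phi$ is strictly concave. The feasible set is nonempty and compact: nonemptiness uses $0 < c < aT$, and compactness follows from $y_i \ge 0$ and $\sum_i y_i = a$. Hence a unique maximizer exists. It suffices to show that the claimed $\by^*$ is feasible and satisfies the KKT conditions, since KKT conditions are sufficient for concave maximization under linear constraints.

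\textbf{The candidate and its feasibility.} Consider $f(z) = \sum_i (ai - c) b_i z^i$. Its coefficients are negative for $i < c/a$ and positive for $i > c/a$, and such $i$ exist on both sides because $i=0$ gives $-c<0$ and $i=T$ gives $aT-c>0$. So there is exactly one sign change, and Descartes' rule gives exactly one positive root $z^*$. Alternatively, note that $f(z)/Z(z) = a\,m(z) - c$, where $m(z) = \sum_i i b_i z^i / Z(z)$ is the mean of a tilted distribution. This mean is strictly increasing, since its derivative in $\ln z$ is a variance, and it runs from $0$ to $T$. Now set $y_i^* = a b_i (z^*)^i / Z(z^*)$. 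Then $\sum_i y_i^* = a$ directly. The equation $f(z^*) = 0$ is exactly $\sum_i i y_i^* = c$. Finally $y_i^* > 0$, so all inequality constraints are slack.

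\textbf{Stationarity.} Form the Lagrangian $\Phi(\by) + \lambda(\sum_i y_i - a) + \mu(\sum_i i y_i - c)$. The partial derivative in $y_i$ is $\ln(b_i/y_i) - 1 + \lambda + \mu i$. Plugging in $y_i^*$ gives $\ln(b_i/y_i^*) = \ln(Z(z^*)/a) - i\ln z^*$. Stationarity then holds with $\mu = \ln z^*$ and $\lambda = 1 - \ln(Z(z^*)/a)$. For the Lagrangian with the opposite sign convention, $\Phi - \lambda(\cdot) - \mu(\cdot)$, we get $\mu = -\ln z^*$. Since $\by^*$ is interior to the nonnegativity constraints, it is the global maximizer.

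\textbf{Derivative.} The cleanest route is an explicit computation rather than citing the envelope theorem. At the optimum,
\[
\Phi^* = \sum_i y_i^*\bigl(\ln(Z(z^*)/a) - i\ln z^*\bigr) = a\ln(Z(z^*)/a) - c\ln z^*.
\]
The map $c \mapsto z^*(c)$ is differentiable by the implicit function theorem, because $f'(z^*) \neq 0$ by the monotonicity of $m$. Differentiating,
\[
\frac{d\Phi^*}{dc} = a\,\frac{Z'(z^*)}{Z(z^*)}\,\frac{dz^*}{dc} - \ln z^* - \frac{c}{z^*}\,\frac{dz^*}{dc}.
\]
We have $a z^* Z'(z^*)/Z(z^*) = a\,m(z^*) = c$. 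Hence the first and third terms cancel, leaving $-\ln z^*$, which is \eqref{eq:derivative-min-rel-ent}.

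The only delicate points are the uniqueness of the root and the differentiability of $z^*$ in $c$. Both follow from the strict monotonicity of the tilted mean $m(z)$, so the main work is that variance argument.
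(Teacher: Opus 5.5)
Your proof is correct, and its backbone matches the paper's: strict concavity of $\sum_i y_i\ln(b_i/y_i)$ plus Lagrangian stationarity, giving $y_i^* \propto b_i (z^*)^i$. The differences lie in the optimality justification and in the derivative step.

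\textbf{Optimality.} The paper derives the form of $\by^*$ from the first-order conditions and asserts that the stationary point is the global maximum. You instead take the candidate, check feasibility, and verify the KKT conditions. These conditions are sufficient for a concave objective under linear constraints, so you get global optimality directly.

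\textbf{Derivative.} The paper gets $d\Phi^*/dc = \mu = -\ln z^*$ in one line from the envelope theorem. You replace that citation by differentiating the closed form $\Phi^* = a\ln(Z(z^*)/a) - c\ln z^*$. The $dz^*/dc$ terms cancel precisely because $z^*Z'(z^*)/Z(z^*) = c/a$; this is the envelope mechanism written out explicitly.

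\textbf{What each route buys.} Yours is self-contained. Monotonicity of the tilted mean gives uniqueness of $z^*$. It also gives $f'(z^*) = aZ(z^*)m'(z^*) > 0$, hence differentiability of $c \mapsto z^*(c)$ by the implicit function theorem; the paper's envelope-theorem step uses this tacitly. You also get an explicit formula for $\Phi^*$, which is convenient when evaluating $H_\Delta$ numerically. The paper's route is shorter and derives the exponential form of $\by^*$ rather than verifying a given candidate.
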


\begin{proof}
Let $G(\by) := \sum_{i \in [T]_0} y_i \ln(b_i / y_i)$. This function is strictly concave in $\R_{+}^{[T]_0}$ (its second derivative with respect to $y_i$ is $-1/y_i < 0$). Thus, the stationary point found via the method of Lagrange multipliers will correspond to the global maximum of $G(\by)$ on the interior of the feasible region. 

We formulate the Lagrangian $\mathcal{L}(\by, \lambda, \mu)$ by relaxing the equality constraints as follows\footnote{We ignore the non-negativity constraint $y_i \geq 0$; the solution obtained will obviously satisfy this constraint.}: 
\begin{align*}
    \mathcal{L}(\by, \lambda, \mu) = \sum_{i} y_i \ln\left(\frac{b_i}{y_i}\right) - \lambda \left( \sum_{i} y_i - a \right) - \mu \left( \sum_{i} i \cdot y_i - c \right)
\end{align*}
where $\lambda$ and $\mu$ are the Lagrange multipliers. Taking the partial derivative of $\mathcal{L}$ with respect to $y_k$ and setting it to zero yields:
\begin{align*}
    \frac{\partial \mathcal{L}}{\partial y_k} = \ln\left(\frac{b_k}{y_k}\right) + y_k \left( -\frac{1}{y_k} \right) - \lambda - k \mu = 0 & &\Rightarrow y_k = b_k e^{-1 - \lambda} (e^{-\mu})^k. %\\
    %&= \ln\left(\frac{b_k}{y_k}\right) - 1 - \lambda - k \mu = 0,
\end{align*}
%which implies $y_k = b_k e^{-1 - \lambda} (e^{-\mu})^k$
To simplify the notation, let $z^* = e^{-\mu}$. This becomes:
\begin{equation}
    y_k = b_k e^{-1 - \lambda} (z^*)^k \label{eq:yk_z_star}
\end{equation}

To determine the multiplier $\lambda$, we apply the first constraint $\sum_i y_i = a$, which gives $e^{-1 - \lambda} Z(z^*) = a$.
Substituting this back into \eqref{eq:yk_z_star} provides the optimal $y^*$:
\begin{equation}
    y_k^* = b_k \cdot a \cdot \frac{(z^*)^k}{Z(z^*)} \label{eq:y_star}
\end{equation}
Since $a, b_k > 0$, and $z^* = e^{-\mu} > 0$, it follows that $y_k^* > 0$ for all $k$.

To find the condition for $z^*$, we apply the second constraint $\sum_i i \cdot y_i = c$.
Rewriting this equation gives:
\begin{equation}
\sum_{i \in [T]_0} (ai - c)b_i (z^*)^i = 0
 \label{eq:z_constraint}
\end{equation}
This establishes the value of $z^*$ implicitly as a function of $a$ and $c$, completing the proof of the first part.

As for the derivative, we simply use the Envelope Theorem, which states that the total derivative of the optimized objective function with respect to a parameter is equal to the partial derivative of the Lagrangian with respect to that parameter, evaluated at the optimal point. This yields
\begin{align*}
\frac{d \Phi^*(a, c, \bb)}{dc} &= \frac{\partial \mathcal{L}}{\partial c}\bigg|_{y^*, \lambda, \mu} = \mu = - \ln z^*. \qedhere
\end{align*}
\end{proof}
\section{Tie-Breaking Violating Sets}

When looking for a set $S$ (with $|S| \leq n/2$) that has minimum expansion, it is crucial to use tie-breaking. Below, we discuss two techniques for tie-breaking. First, in \Cref{sec:local-improvement}, we modify the \emph{local improvement} technique from \cite{Lampis12,formalized-local}. Second, in \Cref{sec:lexicographic-tie-breaking}, we introduce a \emph{scoring-based tie-breaking}.

\subsection{Local Improvement}
\label{sec:local-improvement}

We say that $S$ is an \emph{optimal set} for $G$ if $S$ has the minimum expansion. %Note that there could be multiple optimal sets for $G$.

We will use the idea of local improvement similar to \cite{Lampis12}. However, instead of considering \emph{swapping two vertices}--one from $S$ with one from $\oS$, we instead consider moving \emph{moving one vertex}. Interestingly, this actually gives us a stronger limitation on the configuration vector for our setting compared to \cite{Lampis12}.

Specifically, let $\bbUl(k, c)$ denote the set of all $(\bs, \obs) \in \bbU(k, c)$ that satisfies the following constraints:
\begin{itemize}
\item $s_j = \os_j = 0$ for all $j > \frac{\Delta}{2}$, and,
\item if $k < n/2$, it must be that $\os_{\frac{\Delta}{2}} = 0$.
\end{itemize}

Then, we can show the following lemma.

\begin{lemma}[Local Optimality]
Let $\Delta$ be any even positive integer.
If $S$ is an optimal set for $\Delta$-regular graph $G$ such that $\iota_G(S) < 2$, then its configuration vector must belong to $\bbUl(k, c)$ for some $k, c \in \N$.
\end{lemma}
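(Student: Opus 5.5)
Fix an optimal set $S$ with $|S| = k \le n/2$ (which we may assume, replacing $S$ by $\oS$ if necessary; note that when $k = n/2$ the two roles are symmetric, and $\iota_G(S) = \iota_G(\oS)$ in any case). Write $c = |E(S,\oS)|$, so that $\iota_G(S) = c/k < 2$. The configuration vector lies in $\bbU(k,c)$ by definition. We must rule out three things: $s_j > 0$ for some $j > \Delta/2$; $\os_j > 0$ for some $j > \Delta/2$; and $\os_{\Delta/2} > 0$ when $k < n/2$. In each case we exhibit a set obtained by moving a single vertex whose expansion is strictly smaller than that of $S$, contradicting optimality. Moving a vertex $v$ of cross-degree $j$ across the cut changes the cut size from $c$ to $c - j + (\Delta - j) = c + \Delta - 2j$.

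\emph{A vertex $v \in S$ with $j > \Delta/2$.} Let $S' = S \setminus \{v\}$. Its cut size is $c' = c + \Delta - 2j \le c - 1$ (indeed $\le c - 2$, since $\Delta$ is even). Since $|S'| = k - 1 \le n/2$, we get $\iota_G(S') = c'/(k-1)$. We need $c'/(k-1) < c/k$, i.e. $k c' < (k-1)c$, i.e. $c < k(2j - \Delta)$. Because $2j - \Delta \ge 2$ and $c < 2k$, this holds.

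The edge case $k = 1$ (where $S'$ would be empty) needs a word. Here $c = \Delta$, so $\iota_G(S) = \Delta \ge 2$ for $\Delta \ge 2$, which contradicts the hypothesis $\iota_G(S) < 2$. This is exactly where that hypothesis is used.

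\emph{A vertex $v \in \oS$ with $j \ge \Delta/2$.} Let $S' = S \cup \{v\}$, with cut size $c' = c + \Delta - 2j \le c$. We split into two cases.

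If $k < n/2$, then $k + 1 \le n/2$ (as $n$ is even), so $\iota_G(S') = c'/(k+1) \le c/(k+1) < c/k$, using $c > 0$. This contradicts optimality even when $j = \Delta/2$. If $c = 0$ the graph is disconnected and the expansion is $0$; that case is trivially excluded or harmless, and I would remark on it.

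If $k = n/2$, the case $j = \Delta/2$ is not excluded, so we need only $j > \Delta/2$. Here $\min(|S'|, |\oS'|) = n/2 - 1$ and $c' \le c - 2$. We need $(c-2)/(n/2 - 1) < c/(n/2)$, which is again equivalent to $c < 2 \cdot (n/2) = 2k$, and this holds. Alternatively, by the symmetry $\iota_G(S) = \iota_G(\oS)$, this reduces to the first case applied to $\oS$.

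The main obstacle is only the bookkeeping: tracking which side is the minimum after a move, and handling the boundary cases $k = 1$ and $k = n/2$. The inequality itself always reduces to $c < 2k$, which is exactly the assumption $\iota_G(S) < 2$.
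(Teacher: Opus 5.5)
Your proof is correct and uses the same single-vertex-move argument as the paper. A vertex of cross-degree $j > \Delta/2$ moved across the cut lowers the cut by at least $2$ while shrinking the smaller side by at most $1$, which beats the current ratio exactly because $\iota_G(S) < 2$; and when $|S| < n/2$, a vertex of $\oS$ with cross-degree $\Delta/2$ added to $S$ keeps the cut and enlarges the smaller side. Your write-up is also slightly more careful than the paper's: you handle the $|S| = 1$ and $|S| = n/2$ boundary cases explicitly, and you correctly take $S' = S \cup \{v\}$ with $v \in \oS$ in the second case, where the paper's text has $v \in S$ and $S \setminus \{v\}$ by typo.
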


\begin{proof}
Suppose for the sake of contradiction that there is an optimal set $S$ for $G$ such that $\iota_G(S) < 2$ and $\confv_G(S) \notin \bbUl(k, c)$. We must be in one of the following two cases:
\begin{itemize}
\item Case I: $s_j > 0$ or $\os_j > 0$ for $j > \Delta/2$. Assume w.l.o.g. that it is the former. That is, there exists $v \in S$ such that $|E(\{v\}, \oS)| = j \geq \Delta/2 + 1$.  Consider instead the set $S' = S \setminus \{v\}$. Notice that we have $|E(S', \oS')| = |E(S, \oS)| + (\Delta - j) - j \leq |E(S, \oS)| - 2$, and $\min\{|S'|, |\oS'|\} \geq \min\{|S|, |\oS|\} - 1$. Thus,
\begin{align*}
\iota_G(S') = \frac{|E(S', \oS')|}{\min\{|S'|, |\oS'|\}} \leq \frac{|E(S, \oS)| - 2}{\min\{|S|, |\oS|\} - 1} = \iota_G(S) + \frac{\iota_G(S) - 2}{\min\{|S|, |\oS|\} - 1} < \iota_G(S),
\end{align*}
where the last inequality follows from our assumption $\iota_G(S) < 2$.
\item Case II: $k < n/2$ and $\os_{\Delta/2} > 0$. This means that there exists $v \in S$ such that $|E(\{v\}, \oS)| = \Delta/2$. Again, consider $S' = S \setminus \{v\}$. In this case, we have $|E(S', \oS')| = |E(S, \oS)|$, and $\min\{|S'|, |\oS'|\} = \min\{|S|, |\oS|\} + 1$. Thus,
\begin{align*}
\iota_G(S') = \frac{|E(S', \oS')|}{\min\{|S'|, |\oS'|\}} \leq \frac{|E(S, \oS)|}{\min\{|S|, |\oS|\} + 1} < \iota_G(S).
\end{align*}
\end{itemize}
In both cases, we have $\iota_G(S') < \iota_G(S)$, contradicting the optimality of $S$.
\end{proof}

\subsection{Scoring-Based Tie-Breaking}
\label{sec:lexicographic-tie-breaking}

To introduce scoring-based tie-breaking, it will be best to think of having a (random) partial order $\succ$ on all subsets of $[n]$ (which will be specified later).
We say that a set $S$ is a \emph{minimal optimal set} with respect to a partial ordering $\succ$ on $[n]$ if, among the optimal sets, it is minimal according to $\succ$.

For a given permutation $\pi: [n] \to [n]$, we define the \emph{score} of $S$ as $\scr(S) := \sum_{v \in S} \pi(v)$. Let $\succ_{\pi}$ be the lexicographic ordering of the tuple $(|S|, \scr(S))$. That is, $S' \succeq_{\pi} S$ iff either $|S'| > |S|$, or $|S'| = |S|$ and $\scr(S') \geq \scr(S)$. We use $\cO_{\pi}(G)$ to denote the collection of all minimal optimal sets of $G$ under $\pi$.

The main new lemma we have is the following:
\begin{lemma} \label{lem:lex}
Let $G = ([n], E)$ be any $\Delta$-regular graph and $S \subseteq [n]$ be any optimal set for $G$ with $(\bs, \obs) = \confv_G(S)$.
If $\pi: [n] \to [n]$ is a random permutation, then
\begin{align*}
\Pr_{\pi}[S \in \cO_{\pi}(G)] \leq 
n^{2\Delta} \cdot \frac{1}{\binom{s_{\Delta/2} + \os_{\Delta/2}}{s_{\Delta/2}}}.
\end{align*}
\end{lemma}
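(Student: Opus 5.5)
The plan is to find many equally good alternatives to $S$ whose scores can beat it, and to show that $S$ being minimal forces the relative order of $\pi$ on $S_{\Delta/2}\cup\oS_{\Delta/2}$ into an atypical pattern. Write $a = s_{\Delta/2}$, $b = \os_{\Delta/2}$, $A = S_{\Delta/2}$ and $B = \oS_{\Delta/2}$. If $a = 0$ or $b = 0$ the bound is trivial, since the binomial coefficient is $1$. So assume $a,b \geq 1$.

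\textbf{Step 1 (a local exchange preserves optimality).} Take $u \in A$ and $v \in B$ with $u,v$ non-adjacent, and set $S' = (S\setminus\{u\})\cup\{v\}$.
\begin{itemize}
\item Moving $u$ to the other side changes the cut by $(\Delta - \Delta/2) - \Delta/2 = 0$.
\item Since $v$ is not adjacent to $u$, $v$ still has exactly $\Delta/2$ neighbours in $S\setminus\{u\}$. So moving $v$ into the set also changes the cut by $0$.
\end{itemize}
Hence $|E(S',\oS')| = |E(S,\oS)|$ and $|S'| = |S|$, so $S'$ is also optimal. Since $\scr(S') = \scr(S) - \pi(u) + \pi(v)$, the event $S\in\cO_\pi(G)$ forces $\pi(u) < \pi(v)$ for every non-adjacent pair $(u,v)\in A\times B$.

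\textbf{Step 2 (reduce to a word count).} The relative order of $\pi$ restricted to $A\cup B$ determines a word $w\in\{A,B\}^{a+b}$, recording which class each element belongs to when listed in increasing $\pi$-value. Since $\pi$ is a uniform random permutation, $w$ is uniform over the $\binom{a+b}{a}$ words with $a$ letters $A$. So it suffices to show that the event of Step 1 forces $w$ into a set of at most $n^{2\Delta}$ words.

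Let $v^*\in B$ be the $B$-vertex with smallest $\pi$-value; it corresponds to the first $B$ in $w$. By Step 1, every $u\in A$ not adjacent to $v^*$ satisfies $\pi(u)<\pi(v^*)$. Because $v^*$ has at most $\Delta$ neighbours, at most $\Delta$ letters $A$ appear after the first $B$ in $w$.

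\textbf{Step 3 (count the admissible words).} Such a word is determined by two choices:
\begin{itemize}
\item the position of the first $B$, at most $n$ choices, since everything before it is $A$;
\item the set of positions of the at most $\Delta$ letters $A$ occurring after it, at most $\sum_{j\le\Delta}\binom{n}{j}$ choices.
\end{itemize}
This gives at most $n\cdot\sum_{j\le \Delta} \binom{n}{j} \le n^{2\Delta}$ words for $n$ large (and $\Delta\ge 2$). Multiplying by $1/\binom{a+b}{a}$ gives the claimed bound.

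The main obstacle is Step 1: one has to ensure the exchange really preserves the cut. This is exactly why only non-adjacent pairs are used, and why Step 2 absorbs the adjacent ones via the degree bound $\Delta$ at the single pivot vertex $v^*$. The rest is routine counting.
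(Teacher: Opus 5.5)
Your proof is correct and is essentially the paper's argument: the same exchange of a non-adjacent pair from $S_{\Delta/2}\times\oS_{\Delta/2}$, pivoted at the $\pi$-smallest vertex of $\oS_{\Delta/2}$, which forces all but at most $\Delta$ vertices of $S_{\Delta/2}$ to precede all of $\oS_{\Delta/2}$. The only difference is in the final count: you count the admissible class patterns directly, while the paper union-bounds over the $\Delta$-set $T\subseteq S_{\Delta/2}$ and then absorbs the ratio $\binom{s_{\Delta/2}+\os_{\Delta/2}}{\os_{\Delta/2}}/\binom{s_{\Delta/2}+\os_{\Delta/2}-\Delta}{\os_{\Delta/2}}\le n^{\Delta}$, which is why it needs the case split $s_{\Delta/2},\os_{\Delta/2}\ge \Delta+1$ that your version avoids.
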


Before we prove the above lemma, it will be convenient to start by giving a simple necessary condition for $S$ to be in $\cO_{\pi}(G)$, as stated below\footnote{Recall from \Cref{subsec:prelim-config} that $S_{\Delta/2}$ (resp. $\oS_{\Delta/2}$) denote the set of vertices in $S$ (resp. $\oS$) with cross-degree $\Delta/2$.}.

\begin{lemma} \label{lem:opt-characterization}
%For any $\Delta$-regular graph $G$, $S \subseteq [n]$ of size at most $n/2$ and
Let $G, S, (\bs, \obs)$ be as in \Cref{lem:lex}.
For any $\pi: [n] \to [n]$, if $S \in \cO_{\pi}(G)$ and $s_{\Delta/2}, \os_{\Delta/2} \geq \Delta+1$, then there exists $T \subseteq S_{\Delta/2}$ of size $\Delta$ such that $\pi(v) < \pi(\ov)$ for all $v \in S_{\Delta/2} \setminus T, \ov \in \oS_{\Delta/2}$.
\end{lemma}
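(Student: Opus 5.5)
Suppose the conclusion fails. We will exhibit a set $S'$ with the same size and the same expansion as $S$ but strictly smaller score, contradicting $S \in \cO_{\pi}(G)$. The natural candidate is a swap $S' = (S \setminus \{v\}) \cup \{\ov\}$ with $v \in S_{\Delta/2}$, $\ov \in \oS_{\Delta/2}$ and $\pi(\ov) < \pi(v)$. This changes the score by $\pi(\ov) - \pi(v) < 0$ and keeps $|S'| = |S|$.

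We first compute the change in cut size. Moving $v$ out of $S$ changes the cut by $(\Delta - \Delta/2) - \Delta/2 = 0$, and moving $\ov$ into $S$ likewise changes it by $0$, provided $v$ and $\ov$ are not adjacent. If they are adjacent, the edge $v\ov$ is a cut edge before the swap and remains one after. Each endpoint then has its cross-degree recounted, and a direct count gives a cut change of $+2$, so we must avoid adjacent pairs. Hence, if $v \in S_{\Delta/2}$ and $\ov \in \oS_{\Delta/2}$ are non-adjacent with $\pi(\ov) < \pi(v)$, then $|E(S',\oS')| = |E(S,\oS)|$ and $\min\{|S'|,|\oS'|\} = \min\{|S|,|\oS|\}$. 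So $S'$ is also optimal, has the same size, and has strictly smaller score, i.e.\ $S \succ_{\pi} S'$. This contradicts minimality.

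Therefore, for every $v \in S_{\Delta/2}$ and every $\ov \in \oS_{\Delta/2}$ with $\pi(\ov) < \pi(v)$, the vertices $v$ and $\ov$ must be adjacent. Let $\ov_0 \in \oS_{\Delta/2}$ be the vertex minimizing $\pi$ over $\oS_{\Delta/2}$; it exists since $\os_{\Delta/2} \geq \Delta+1 > 0$. Let $T_0 = \{v \in S_{\Delta/2} : \pi(v) > \pi(\ov_0)\}$. Every $v \in T_0$ must be adjacent to $\ov_0$, and $\ov_0$ has degree $\Delta$, so $|T_0| \le \Delta$. Since $s_{\Delta/2} \geq \Delta+1 > \Delta$, we can enlarge $T_0$ to a set $T \subseteq S_{\Delta/2}$ of size exactly $\Delta$ by adding arbitrary further vertices of $S_{\Delta/2}$. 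Then every $v \in S_{\Delta/2} \setminus T$ satisfies $\pi(v) < \pi(\ov_0) \le \pi(\ov)$ for all $\ov \in \oS_{\Delta/2}$; the inequality is strict because $\pi$ is injective and $v \ne \ov_0$. This is exactly the claimed condition.

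The only delicate step is the adjacency case in the cut computation: we must confirm that an adjacent swap genuinely fails to preserve the cut, and then handle such pairs through the degree bound rather than the swap argument. Everything else is bookkeeping. The hypothesis on $\os_{\Delta/2}$ is only needed to guarantee that $\ov_0$ exists, and the hypothesis on $s_{\Delta/2}$ is only needed so that $T$ can be padded to size exactly $\Delta$.
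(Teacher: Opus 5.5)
Your proof is correct and follows the paper's argument: take the $\pi$-minimal vertex $\ov_0$ of $\oS_{\Delta/2}$, swap it with any non-adjacent $v \in S_{\Delta/2}$ having $\pi(v) > \pi(\ov_0)$ (which keeps size and cut but lowers the score), conclude all such $v$ lie among the at most $\Delta/2$ neighbors of $\ov_0$, and pad to a $\Delta$-set $T$; the paper instead fixes $T$ to contain those neighbors first and argues by contradiction. Your choice to minimize over $\oS_{\Delta/2}$ rather than over all of $\oS$, as the paper literally writes, is the correct one, and the $+2$ computation for adjacent pairs is not actually needed for the logic.
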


\begin{proof}
$\ov^*$ be the set of vertex in $\oS$ with smallest value with respect to $\pi$, i.e. $\ov^* = \argmin_{\ov \in \oS} \pi(\ov)$. Then, let $T^*$ be any $\Delta$-size subset of $S_{\Delta/2}$ that contains all neighbors of $\ov^*$. We claim that $T^*$ satisfies the property. Suppose that this is false. Then, there exists some $v^* \in S_{\Delta/2} \setminus T$ such that $\pi(v^*) > \pi(\ov^*)$. Now, consider $S' = S \setminus \{v^*\} \cup \{\ov^*\}$. Since $v^* \notin T$, it is not a neighbor of $\ov^*$. From this and from $v^* \in S_{\Delta/2}, \ov^* \in \oS_{\Delta/2}$, we have $|E(S', \oS')| = |E(S, \oS)|$. Thus, $\iota_G(S) = \iota_G(S')$. Meanwhile, we have $\scr(S') = \scr(S) + \pi(\ov^*) - \pi(v^*) < \scr(S)$. This implies that $S \succneqq_{\pi} S'$, which contradicts $S \in \cO_\pi(G)$.
\end{proof}

We can now easily prove \Cref{lem:lex}.

\begin{proof}[Proof of \Cref{lem:lex}]
If $s_{\Delta/2} \leq \Delta$ or $\os_{\Delta/2} \leq \Delta$, then the bound holds trivially as the RHS is more than one. Note also that if $|S| > n/2$, then $\Pr_{\pi}[S \in \cO_{\pi}(G)] = 0$ since $S \succneqq_{\pi} \oS$. Thus, we can suppose henceforth that $s_{\Delta/2}, \os_{\Delta/2} \geq \Delta + 1$ and $|S| \leq n/2$. Applying \Cref{lem:opt-characterization} and the union bound yields
\begin{align*}
\Pr_{\pi}[S \in \cO_\pi(G)] &\leq \Pr_{\pi}\left[\exists T \in \binom{S_{\Delta/2}}{\Delta}, \pi(v) < \pi(\ov) \text{ for all } v \in S_{\Delta/2} \setminus T, \ov \in \oS_{\Delta/2}\right] \\
&\leq \sum_{T \in \binom{S_{\Delta/2}}{\Delta}} \Pr_{\pi}\left[\pi(v) < \pi(\ov) \text{ for all } v \in S_{\Delta/2} \setminus T, \ov \in \oS_{\Delta/2}\right].
\end{align*}
Since $\pi$ is a uniformly random permutation, the relative order of elements in $(S_{\Delta/2} \setminus T) \cup \oS_{\Delta/2}$ is uniformly random. Thus, the inner probability, which is the probability that $\pi$ orders all elements in $(S_{\Delta/2} \setminus T)$ before $\oS_{\Delta/2}$, is exactly $\binom{s_{\Delta/2} + \os_{\Delta/2} - \Delta}{\os_{\Delta/2}}$. Plugging this into the above gives
\begin{align*}
\Pr_{\pi}[S \in \cO_\pi(G)] \leq n^{\Delta} \cdot \frac{1}{\binom{s_{\Delta/2} + \os_{\Delta/2} - \Delta}{\os_{\Delta/2}}}
\leq n^{2\Delta} \cdot  \frac{1}{\binom{s_{\Delta/2} + \os_{\Delta/2}}{\os_{\Delta/2}}}. &\qedhere
\end{align*}
\end{proof}

\section{Improved Bound for Expansion: Proof of \Cref{thm:main}}

We are now ready to prove our main theorem (\Cref{thm:main}).
Let $\unu = \unu_\Delta$ and $\ualpha$ be from \Cref{thm:bollobas-classic} and \Cref{thm:small-set} respectively.

We have
\begin{align*}
\Pr_{G \sim \cP(n, \Delta)}[\iota(G) < \nu] &\leq \Pr_{G \sim \cP(n, \Delta)}[\iota(G) < \unu] + \Pr_{G \sim \cP(n, \Delta)}[\iota_{\leq \ualpha}(G) < \nu] + \Pr_{G \sim \cP(n, \Delta)}[\unu \leq \iota(G) < \nu \wedge \iota_{\leq \ualpha}(G) \geq \nu] \\
&= o(1) + \Pr_{G \sim \cP(n, \Delta)}[\unu \leq \iota(G) < \nu \wedge \iota_{\leq \ualpha}(G) \geq \nu].
\end{align*}
where the last inequality is due to \Cref{thm:bollobas-classic,thm:small-set}. We can thus focus on showing that the last term is $o(1)$.
We further write this term as follows, where $\pi$ is a uniformly random permutation (independent of $G$).
\begin{align*}
&\Pr_{G \sim \cP(n, \Delta)}[\unu \leq \iota(G) < \nu \wedge \iota_{\leq \ualpha}(G) \geq \nu] \\
&= \Pr_{G \sim \cP(n, \Delta), \pi}\left[\exists S, \ualpha n < |S| \leq n/2 \text{ and } \unu \leq \iota_{G}(S) < \nu \text{ and } S \in \cO_{\pi}(G)\right] \\
&= \sum_{k=\lfloor \ualpha n \rfloor + 1}^{n/2} \sum_{c = \lceil \unu k \rceil}^{\lceil \nu k \rceil -1} \sum_{(\bs, \obs') \in \bbUl(k, c)} \sum_{S \in \binom{[n]}{k}} \Pr_{G, \pi}[\confv_{G}(S) = (\bs, \obs) \text{ and } S \in \cO_\pi(G)] \\
&= \sum_{k=\lfloor \ualpha n \rfloor + 1}^{n/2} \sum_{c = \lceil \unu k \rceil}^{\lceil \nu k \rceil -1} \sum_{(\bs, \obs') \in \bbUl(k, c)} \sum_{S \in \binom{[n]}{k}} \Pr_{G}[\confv_{G}(S) = (\bs, \obs)]\Pr_{G, \pi}[S \in \cO_\pi(G) \mid \confv_{G}(S) = (\bs, \obs)] \\
&\leq \sum_{k=\lfloor \ualpha n \rfloor + 1}^{n/2} \sum_{c = \lceil \unu k \rceil}^{\lceil \nu k \rceil -1} \sum_{(\bs, \obs') \in \bbUl(k, c)} \binom{n}{k} \cdot P(\bs, \obs) \cdot n^{2\Delta} \cdot \frac{1}{\binom{s_{\Delta/2} + \os_{\Delta/2}}{s_{\Delta/2}}} \\
&\leq 2^{o(n)} \cdot \max_{k, c, (\bs, \obs') \in \bbUl(k, c)} \binom{n}{k} \cdot P(\bs, \obs) \cdot \frac{1}{\binom{s_{\Delta/2} + \os_{\Delta/2}}{s_{\Delta/2}}}
\end{align*}
where the first inequality is from \Cref{lem:lampis-counting,lem:lex}, and the last maximum is over $k \in \{\lfloor \ualpha n \rfloor + 1, \dots, n/2\}$ and $c \in \{\lceil \unu k \rceil, \dots, \lceil \nu k \rceil - 1\}$.

As such, it suffices to show that the inner most term is $2^{-\Omega(n)}$. For $(\bs, \obs) \in \bbUl(k, c)$, we can simplify this term further as follows:
\begin{align*}
&F(n, k, c, \bs, \obs) := \binom{n}{k} \cdot P(\bs, \obs) \cdot \frac{1}{\binom{s_{\Delta/2} + \os_{\Delta/2}}{s_{\Delta/2}}} \\
&= \frac{n!}{(\Delta n)!!} \cdot \left(c! (\Delta k - c)!! (\Delta(n- k) - c)!!\right) \cdot \left(\prod_{i=0}^{\Delta/2} \frac{1}{s_i!} \cdot \binom{\Delta}{i}^{s_i} \right) \cdot \left(\prod_{i=0}^{\Delta/2} \frac{1}{\os_i!} \cdot \binom{\Delta}{i}^{\os_i} \right) \cdot \frac{1}{\binom{s_{\Delta/2} + \os_{\Delta/2}}{s_{\Delta/2}}} \\
&= \frac{n!}{(\Delta n)!!} \cdot \left(c! (\Delta k - c)!! (\Delta(n- k) - c)!!\right) \\ &\qquad \cdot \left(\frac{1}{(s_{\Delta/2} + \os_{\Delta/2})!} \binom{\Delta}{{\Delta/2}}^{s_{\Delta/2} + \os_{\Delta/2}}\right) \cdot \left(\prod_{i=0}^{{\Delta/2}-1} \frac{1}{s_i!} \cdot \binom{\Delta}{i}^{s_i} \right) \cdot \left(\prod_{i=0}^{{\Delta/2}-1} \frac{1}{\os_i!} \cdot \binom{\Delta}{i}^{\os_i} \right).
\end{align*}

We aim to find an asymptotic estimate for $F(n, k, c, \obs, \bs)$. Let $\alpha = k/n, \gamma = c/n, \bx = \bs / n$ and $\obx = \obs / n$.  
Using Stirling's approximation (from \Cref{subsec:stirling}), we get
\begin{align*}
\frac{1}{n} \ln F(n, k, c, \obs, \bs) - o(1) &\leq f(\alpha, \gamma, \bx, \obx) \\ &:= -\frac{\Delta}{2} \ln \Delta + \gamma \ln \gamma + \frac{\alpha\Delta - \gamma}{2} \ln\big(\alpha\Delta - \gamma\big) + \frac{\Delta(1-\alpha) - \gamma}{2} \ln\big(\Delta(1-\alpha) - \gamma\big) \\
&+ (x_{\Delta/2} + \ox_{\Delta/2}) \ln\Paren{\frac{\binom{\Delta}{\Delta/2}}{x_{\Delta/2} + \ox_{\Delta/2}}} + \sum_{i=0}^{\Delta/2 - 1} x_i \ln\Paren{\frac{\binom{\Delta}{i}}{x_i}} + \sum_{i=0}^{\Delta/2-1} \ox_i \ln \Paren{\frac{\binom{\Delta}{i}}{\ox_i}}
\end{align*}

Also defined the normalized version of (each vector in) $\bbUl(n, k)$ as follows: Let $\bbXl(\alpha, \gamma)$ denote the set of all $\bx \in [0,1]^{[\Delta/2]_0}$ that satisfies the following constraints:
\begin{itemize}
\item $x_j = 0$ for all $j > \frac{\Delta}{2}$,
\item Furthermore, if $\alpha > 1/2$, it must be that $x_{\frac{\Delta}{2}} = 0$.
\item $\sum_{i \in [\Delta/2]_0} x_i = \alpha$
\item $\sum_{i \in [\Delta/2]_0} i \cdot x_i = \gamma$ 
\end{itemize}
By definition, if $(\bs, \obs) \in \bbUl(n, k)$, then $\bx \in \bbXl(\alpha, \gamma)$ and $ \obx \in \bbXl(1 - \alpha, \gamma)$. Thus, combining all the bounds so far, we have
\begin{align*}
\Pr_{G \sim \cP(n, \Delta)}[\iota(G) < \nu] &\leq o(1) + 2^{o(n)} \cdot 2^{n \cdot \sup_{\alpha \in (\ualpha, 1/2]} \sup_{\gamma \in [\alpha \cdot \unu, \alpha \cdot \nu]} \sup_{\bx \in \bbXl(\alpha, \gamma) \atop \obx \in \bbXl(1 - \alpha, \gamma)} f(\alpha, \gamma, \bx, \obx)}
\end{align*}

Thus, it suffices to show that the supremum term in the exponent on the RHS is negative, by considering two cases based on whether $\alpha = 1/2$.

\subsection{Symmetric Case: $\alpha = 1/2$}

Notice that
\begin{align*}
f(1/2, \gamma, \bx, \obx) = &-\frac{\Delta}{2} \ln \Delta + \gamma \ln \gamma + \Paren{\Delta/2 - \gamma} \ln\big(\Delta/2 - \gamma\big) \\
&+ (x_{\Delta/2} + \ox_{\Delta/2}) \ln\Paren{\frac{\binom{\Delta}{\Delta/2}}{x_{\Delta/2} + \ox_{\Delta/2}}} + \sum_{i=0}^{\Delta/2 - 1} x_i \ln\Paren{\frac{\binom{\Delta}{i}}{x_i}} + \sum_{i=0}^{\Delta/2-1} \ox_i \ln \Paren{\frac{\binom{\Delta}{i}}{\ox_i}}
\end{align*}

For any $\bx, \obx$, let $\tbx = (\bx + \obx)/2$. Since $x \ln(x)$ is convex, we have
\begin{align*}
f(1/2, \gamma, \bx, \obx) \leq &-\frac{\Delta}{2} \ln \Delta + \gamma \ln \gamma + \Paren{\Delta/2 - \gamma} \ln\big(\Delta/2 - \gamma\big) \\
&+ 2\tx_{\Delta/2} \ln\Paren{\frac{\binom{\Delta}{\Delta/2}/2}{\tx_{\Delta/2}}} + 2\sum_{i=0}^{\Delta/2 - 1} \tx_i \ln\Paren{\frac{\binom{\Delta}{i}}{\tx_i}} &=: h(\gamma, \tbx).
\end{align*}
Notice that, when $\bx, \obx \in (\bx, \obx) \in \bbXl(1/2, \gamma)$, we have that $\tbx \in \bbXl(1/2, \gamma)$. As a result, we can conclude that
\begin{align*}
\sup_{\bx, \obx \in \bbXl(1/2, \gamma)} f(1/2, \gamma, \bx, \obx) \leq \sup_{\tbx \in \bbXl(1/2, \gamma)} h(\gamma, \tbx) =: H_{\Delta}(\gamma).
\end{align*}

Let us define $\bb \in \R^{[\Delta/2]_0}$ by 
\begin{align*}
b_i =
\begin{cases}
\binom{\Delta}{i} & \text{ if } i \ne \Delta/2, \\
\binom{\Delta}{\Delta/2}/2 & \text{ if } i = \Delta/2.
\end{cases}
\end{align*}
Then, we have\footnote{Recall that $\Phi^*$ is defined in \Cref{opt-prob:entropy-min}.}
\begin{align} \label{eq:def-for-main-bound}
H_{\Delta}(\gamma) = -\frac{\Delta}{2} \ln \Delta + \gamma \ln \gamma + \Paren{\Delta/2 - \gamma} \ln\big(\Delta/2 - \gamma\big) + 2 \Phi^*(1/2, \gamma, \bb).
\end{align}
We henceforth drop the subscript $\Delta$ for brevity.
From \Cref{thm:min-entropy-main}, we now have
\begin{align} \label{eq:symmetric-h-derivative}
H'(\gamma) = \ln\left(\frac{\gamma}{\frac{\Delta}{2} - \gamma}\right) - 2\ln z^*
\end{align}
where $z^*$ is the solution to
\begin{align} \label{eq:polynomial-eq}
P(z) := \sum_{i=0}^{\Delta/2} \Paren{\frac{i}{2} - \gamma} b_i z^i = 0.
\end{align}

We will now prove the following technical lemma that bounds $z^*$.

\begin{lemma} \label{lem:root-bound}
Let $\gamma \in (0, \Delta/8)$, for $z^*$ that is a root of $P(z)$ as defined in \eqref{eq:polynomial-eq}, we have $z^* < \sqrt{\frac{\gamma}{\frac{\Delta}{2} - \gamma}}$.
\end{lemma}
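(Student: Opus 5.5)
The plan is to reduce the claim to a sign check of $P$ at the point $w := \sqrt{\gamma/(\frac{\Delta}{2}-\gamma)}$. The coefficients $(\frac{i}{2}-\gamma)b_i$ of $P$ are negative for $i < 2\gamma$ and nonnegative for $i \geq 2\gamma$. They change sign exactly once, since $2\gamma < \Delta/4 < \Delta/2$ and so the top coefficient is positive. By Descartes' rule of signs, as in the footnote to \Cref{thm:min-entropy-main}, $P$ has a unique positive root $z^*$, with $P<0$ on $(0,z^*)$ and $P>0$ on $(z^*,\infty)$. Hence it suffices to prove $P(w) > 0$.

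Dividing $P(w)$ by $\frac12\sum_i b_i w^i$, the inequality $P(w)>0$ becomes $\mathbb{E}_\mu[J] > 2\gamma$, where $\mu$ is the probability distribution on $[\Delta/2]_0$ with $\mu(j)\propto b_j w^j$. From the definition of $w$ we have $2\gamma = \Delta w^2/(1+w^2)$. The hypothesis $\gamma<\Delta/8$ gives $w^2<1/3$, so $w<1$.

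To lower-bound $\mathbb{E}_\mu[J]$, I would compare $\mu$ with the folded binomial. Let $X\sim\mathrm{Bin}(\Delta, p)$ with $p=w/(1+w)$, and let $Y=\min(X,\Delta-X)$. Then $\Pr[Y=j]\propto \binom{\Delta}{j}(w^j+w^{\Delta-j})$ for $j<\Delta/2$, and $\Pr[Y=\Delta/2]\propto \binom{\Delta}{\Delta/2}w^{\Delta/2}$. So the likelihood ratio is
\begin{align*}
\frac{\mu(j)}{\Pr[Y=j]} \propto \frac{1}{1+w^{\Delta-2j}} \quad (j<\Delta/2), \qquad \frac{\mu(\Delta/2)}{\Pr[Y=\Delta/2]}\propto \frac12 .
\end{align*}
This is nondecreasing in $j$ because $w<1$, so $\mu$ dominates $Y$ in the likelihood-ratio order and hence stochastically. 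This yields $\mathbb{E}_\mu[J]\ge \mathbb{E}[Y]$.

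It then remains to show $\mathbb{E}[Y] > \Delta w^2/(1+w^2)$. Note that $\mathbb{E}[X]=\Delta w/(1+w)$ already exceeds $\Delta w^2/(1+w^2)$, since this is equivalent to $w>w^2$. The folding loss is $\mathbb{E}[(2X-\Delta)^+]$, which is exponentially small in $\Delta$ when $p<1/4$ (here $p<0.37$), and this is the main obstacle. For the general case I would bound the folding loss by a Chernoff-type estimate and compare it with the gap
\[
\Delta\left(\frac{w}{1+w}-\frac{w^2}{1+w^2}\right)=\frac{\Delta w(1-w)}{(1+w)(1+w^2)}.
\]
Since the theorem only concerns $\Delta\in\{4,6,8\}$, the fallback is direct. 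Multiply $P(w)>0$ through by positive denominators and substitute $\gamma=\frac{\Delta}{2}\frac{w^2}{1+w^2}$. This turns the claim into positivity of an explicit univariate polynomial in $w$ on $(0,1/\sqrt3)$. I would verify this by factoring out the root at $w=0$ and checking sign via Sturm sequences or elementary bounds. For small $w$ the leading term is $\frac12 b_1 w$ against $\gamma b_0 = O(w^2)$, which is positive.
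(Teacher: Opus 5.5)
\textbf{The central comparison step is backwards.} Your reduction to $P(w)>0$ via Descartes' rule is correct, and so is the rewriting as $\mathbb{E}_\mu[J] > 2\gamma = \Delta w^2/(1+w^2)$. The problem is the likelihood ratio $\mu(j)/\Pr[Y=j] \propto 1/(1+w^{\Delta-2j})$: it is \emph{decreasing} in $j$ when $w<1$, not nondecreasing. As $j$ grows, the exponent $\Delta-2j$ shrinks, so $w^{\Delta-2j}$ increases. The ratio therefore falls from $1/(1+w^{\Delta})\approx 1$ at $j=0$ to $1/2$ at $j=\Delta/2$.

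Hence $Y$ dominates $\mu$, and the comparison gives $\mathbb{E}_\mu[J]\le \mathbb{E}[Y]$, which is useless here. Intuitively, folding reflects the upper tail of $X$ onto values just below $\Delta/2$, the largest values available, while $\mu$ (truncation with half weight at $\Delta/2$) discards that tail. Concretely, take $\Delta=4$, $w=1/2$. Then $\mu\propto(1,2,3/4)$ gives $\mathbb{E}_\mu[J]=14/15$, while $\Pr[Y=j]\propto(17/16,5/2,3/2)$ gives $\mathbb{E}[Y]=88/81>14/15$. So the folded-binomial route yields no lower bound, and the Chernoff discussion is moot.

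Lossy estimates are ruled out in any case. For $\Delta=4$ the lemma is tight at the endpoint: at $w=1/\sqrt3$ (i.e.\ $\gamma=1/2$) one has $\mu\propto(1,4/\sqrt3,1)$, so $\mathbb{E}_\mu[J]=1=2\gamma$ exactly. Any lower bound that beats $2\gamma$ on all of $(0,\Delta/8)$ must become exact as $\gamma\to\Delta/8$.

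\textbf{Your fallback is the paper's proof, but only as a plan.} Since the main route fails, this fallback carries the whole argument, and you have not executed it. The paper sets $S_1(z)=\sum_i \frac{i}{2}b_i z^i$, $S_2(z)=\sum_i b_i z^i$ and $Q(z)=(1+z^2)S_1(z)-\frac{\Delta}{2}z^2S_2(z)$. Then $Q(w)=(1+w^2)P(w)$ after substituting $\gamma=\frac{\Delta}{2}\frac{w^2}{1+w^2}$. It computes:
\begin{itemize}
\item $Q(z)=z(2+z)(1-3z^2)$ for $\Delta=4$;
\item $Q(z)=3z\bigl((1-5z^4)+z(4-10z^2)\bigr)$ for $\Delta=6$;
\item $Q(z)=2z\bigl(2+z(3-7z^2)+z(9-35z^4)+14z^2(2-5z^2)\bigr)$ for $\Delta=8$.
\end{itemize}
After this grouping, each is termwise positive on $(0,1/\sqrt3)$. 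You need to actually do this computation. For $\Delta=4$ your bound must also be sharp enough to handle the zero of $Q$ at $z=1/\sqrt3$, as the exact factor $1-3z^2$ does; crude ``elementary bounds'' will not. Without this step the proposal does not establish the lemma.
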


\begin{proof}
There is exactly one sign change in the coefficients; by Descartes' Rule of Signs, $P(z)$ has exactly one positive real root, $z^*$. Since $P(0) < 0$ and $P(z) \to \infty$ as $z \to \infty$, to show that $z^* < \sqrt{\frac{\gamma}{\frac{\Delta}{2} - \gamma}} =: z_0$ it suffices to show that $P(z_0) > 0$.

Since $z_0 = \sqrt{\frac{\gamma}{\Delta/2 - \gamma}}$, rewriting this relation yields $\gamma = \frac{\frac{\Delta}{2} z_0^2}{1 + z_0^2}.$ Thus, we have
\begin{align*}
P(z_0) > 0 \Leftrightarrow (1 + z_0^2)P(z_0) > 0
\Leftrightarrow (1 + z_0^2) S_1(z_0) - \frac{\Delta}{2} z_0^2 S_2(z_0) > 0
\end{align*}
where $S_1(z) := \sum_{i=0}^{\Delta/2} b_i \frac{i}{2} z^i$ and $S_2(z) := \sum_{i=0}^{\Delta/2} b_i z^i.$
Let $Q(z) = (1 + z^2) S_1(z) - \frac{\Delta}{2} z^2 S_2(z).$ Our goal is now to show that $Q(z_0) > 0$. Since $\gamma < \Delta/8$, we have $z_0 = \sqrt{\frac{\gamma}{\Delta/2 - \gamma}} = \frac{1}{\sqrt{\Delta/(2\gamma) - 1}} < \frac{1}{\sqrt{3}}$. Thus, it suffices to show that $Q(z) > 0$ for all $z \in (0, 1/\sqrt{3})$. To do this, we analyze the three cases for $\Delta = 4, 6, 8$.

\paragraph{Case 1: $\Delta = 4$.}
Here, $\Delta/2 = 2$. The coefficients are $b_0 = 1$, $b_1 = 4$, $b_2 = 3$.
We have
\begin{align*}
S_1(z) = 2z + 3z^2, & &S_2(z) = 1 + 4z + 3z^2.
\end{align*}
Evaluating $Q(z)$ with $\frac{\Delta}{2} = 2$:
\begin{align*}
Q(z) &= (1+z^2)(2z + 3z^2) - 2z^2(1 + 4z + 3z^2) \\
&= (2z + 3z^2 + 2z^3 + 3z^4) - (2z^2 + 8z^3 + 6z^4) \\
&= 2z + z^2 - 6z^3 - 3z^4 \\
&= z(2+z)(1 - 3z^2).
\end{align*}
For $z \in (0, 1/\sqrt{3})$, $1 - 3z^2 > 0$. Hence, all factors are non-negative, yielding $Q(z) \geq 0$.

\paragraph{Case 2: $\Delta = 6$.}
Here, $\Delta/2 = 3$. The coefficients are $b_0 = 1, b_1 = 6, b_2 = 15, b_3 = 10$.
We have
\begin{align*}
S_1(z) = 3z + 15z^2 + 15z^3,
& &S_2(z) = 1 + 6z + 15z^2 + 10z^3.
\end{align*}
Evaluating $Q(z)$ with $\frac{\Delta}{2} = 3$:
\begin{align*}
Q(z) &= (1+z^2)(3z + 15z^2 + 15z^3) - 3z^2(1 + 6z + 15z^2 + 10z^3) \\
&= (3z + 15z^2 + 15z^3 + 3z^3 + 15z^4 + 15z^5) - (3z^2 + 18z^3 + 45z^4 + 30z^5) \\
&= 3z + 12z^2 - 30z^4 - 15z^5 \\
&= 3z\Paren{(1 - 5z^4) + z(4 - 10z^2)}.
\end{align*}
Similarly, all terms are positive for $z \in (0, 1/\sqrt{3})$, implying $Q(z) > 0$.

\paragraph{Case 3: $\Delta = 8$.}
Here, $\Delta/2 = 4$. The coefficients are $b_0 = 1, b_1 = 8, b_2 = 28, b_3 = 56, b_4 = 35$.
We have
\begin{align*}
S_1(z) = 4z + 28z^2 + 84z^3 + 70z^4, 
& &S_2(z) = 1 + 8z + 28z^2 + 56z^3 + 35z^4.
\end{align*}
Evaluating $Q(z)$ with $\frac{\Delta}{2} = 4$:
\begin{align*}
Q(z) &= (1+z^2)(4z + 28z^2 + 84z^3 + 70z^4) - 4z^2(1 + 8z + 28z^2 + 56z^3 + 35z^4) \\
&= 4z + 28z^2 + 84z^3 + 70z^4 + 4z^3 + 28z^4 + 84z^5 + 70z^6 \\
&\quad - (4z^2 + 32z^3 + 112z^4 + 224z^5 + 140z^6) \\
&= 4z + 24z^2 + 56z^3 - 14z^4 - 140z^5 - 70z^6 \\
&= 2z\Paren{2 + z(3 - 7z^2) + z(9 - 35z^4) + 14z^2(2 - 5z^2)}.
\end{align*}
Again, all terms are positive for $z \in (0, 1/\sqrt{3})$, implying $Q(z) > 0$.
\end{proof}

From \eqref{eq:symmetric-h-derivative} and \Cref{lem:root-bound}, $H$ is strictly increasing in $\gamma$ (for $\gamma \in (0, \Delta/8)$). Thus, we have
\begin{align*}
\sup_{\gamma \in [0, \nu/2]} \sup_{\bx, \obx \in \bbXl(1/2, \gamma)} f(1/2, \gamma, \bx, \obx) \leq \sup_{\gamma \in [0, \nu/2]} H(\gamma) = H_{\Delta}(\nu/2) < H(\nu^*_{\Delta}/2) = 0,
\end{align*}
where the last equality is from our definition of $\nu^*_{\Delta}$ (in \Cref{thm:main}). This concludes the proof for $\alpha = 1/2$.

\subsection{Asymmetric Case: $\alpha \ne 1/2$}
\label{subsec:asym}

Recall that our goal is to show that
\begin{align*}
f^* := \sup_{\alpha \in (\ualpha, 1/2)} \sup_{\gamma \in [\alpha \cdot \unu, \alpha \cdot \nu]} \sup_{\bx \in \bbXl(\alpha, \gamma) \atop \obx \in \bbXl(1 - \alpha, \gamma)} f(\alpha, \gamma, \bx, \obx)
\end{align*}
is negative.

To do so, first recall that in this case $\ox_{\Delta/2} = 0$. Thus, we have
\begin{align*}
\sup_{\bx \in \bbXl(\alpha, \gamma) \atop \obx \in \bbXl(1 - \alpha, \gamma)} f(\alpha, \gamma, \bx, \obx) 
&=  -\frac{\Delta}{2} \ln \Delta + \gamma \ln \gamma + \frac{\alpha\Delta - \gamma}{2} \ln\big(\alpha\Delta - \gamma\big) + \frac{\Delta(1-\alpha) - \gamma}{2} \ln\big(\Delta(1-\alpha) - \gamma\big) \\
&\qquad + \sup_{\bx \in \bbXl(\alpha, \gamma)} \Paren{\sum_{i=0}^{\Delta/2} x_i \ln\Paren{\frac{\binom{\Delta}{i}}{x_i}}} + \sup_{\obx \in \bbXl(1 - \alpha, \gamma)} \Paren{ \sum_{i=0}^{\Delta/2-1} \ox_i \ln \Paren{\frac{\binom{\Delta}{i}}{\ox_i}}}.
\end{align*}
Let us define $\bb \in \R^{[\Delta/2]_0}$ by $b_i = \binom{\Delta}{i}$ and $\obb \in \R^{[\Delta/2-1]_0}$ by $\ob_i = \binom{\Delta}{i}$. We can simply write the above as
\begin{align*}
\sup_{\bx \in \bbXl(\alpha, \gamma) \atop \obx \in \bbXl(1 - \alpha, \gamma)} f(\alpha, \gamma, \bx, \obx) &= -\frac{\Delta}{2} \ln \Delta + \gamma \ln \gamma + \frac{\alpha\Delta - \gamma}{2} \ln\big(\alpha\Delta - \gamma\big) + \frac{\Delta(1-\alpha) - \gamma}{2} \ln\big(\Delta(1-\alpha) - \gamma\big) \\
&\qquad + \Phi^*(\alpha, \gamma, \bb) + \Phi^*(1 - \alpha, \gamma, \obb).
\end{align*}
Let $G(\alpha, \gamma) = -\frac{\Delta}{2} \ln \Delta + \gamma \ln \gamma + \frac{\alpha\Delta - \gamma}{2} \ln\big(\alpha\Delta - \gamma\big) + \frac{\Delta(1-\alpha) - \gamma}{2} \ln\big(\Delta(1-\alpha) - \gamma\big)$ denote the first term above.

We then certify that $f^* > 0$ using a numerical approach via discretization. In particular, we let $\cI^{\alpha}$ denote the set of intervals $[\alphalo, \alphaup]$ such that $(\ualpha, 1/2) \subseteq \bigcup_{[\alphalo, \alphaup] \in \cI^{\alpha}} [\alphalo, \alphaup]$ and, similarly, let $\cI^{\gamma}_{\alphalo, \alphaup}$ denote the set of intervals $[\gammalo, \gammaup]$ such that $[\alphalo \cdot \unu, \alphaup \cdot \nu] \subseteq \bigcup_{[\gammalo, \gammaup] \in \cI^{\gamma}_{\alphalo, \alphaup}} [\gammalo, \gammaup]$. We have that
\begin{align}
f^* &= \sup_{\alpha \in (\ualpha, 1/2)} \sup_{\gamma \in [\alpha \cdot \unu, \alpha \cdot \nu]} G(\alpha, \gamma) + \Phi^*(\alpha, \gamma, \bb) + \Phi^*(1 - \alpha, \gamma, \obb) \nonumber \\
&\leq \max_{[\alphalo, \alphaup] \in \cI^{\alpha}} \max_{[\gammalo, \gammaup] \in \cI^{\gamma}_{\alphalo, \alphaup}} \sup_{\alpha \in [\alphalo, \alphaup], \gamma \in [\gammalo, \gammaup]} \Paren{G(\alpha, \gamma) + \Phi^*(\alpha, \gamma, \bb) + \Phi^*(1 - \alpha, \gamma, \obb)} \label{eq:f-star-expanded}
\end{align}
We note that 
$$\frac{\partial G}{\partial \alpha} = \frac{\Delta}{2} \ln \left( \frac{\alpha\Delta - \gamma}{\Delta(1-\alpha) - \gamma} \right)$$
is negative since $\alpha \in (0,1/2)$, and
$$\frac{\partial G}{\partial \gamma} = \ln \left( \frac{\gamma}{\sqrt{(\alpha\Delta - \gamma)(\Delta(1-\alpha) - \gamma)}} \right)$$ is also negative since $\gamma < \Delta \alpha / 4$. In other words, $G$ is a decreasing function of both $\alpha$ and $\gamma$. Thus, we have
\begin{align*}
\sup_{\alpha \in [\alphalo, \alphaup], \gamma \in [\gammalo, \gammaup]} G(\alpha, \gamma) \leq G(\alphalo, \gammalo).
\end{align*}

Meanwhile, $\Phi^*(\alpha, \gamma, \bb) + \Phi^*(1 - \alpha, \gamma, \obb)$ are from optimization problem \eqref{opt-prob:entropy-min}, which is a maximization of a concave objective under linear constraints. This implies that the supremum is simply the maximum at one of the corners. More formally, we have
\begin{align*}
\sup_{\alpha \in [\alphalo, \alphaup], \gamma \in [\gammalo, \gammaup]} \Phi^*(\alpha, \gamma, \bb) &\leq \max_{\alpha \in \{\alphalo, \alphaup\}, \gamma \in \{\gammalo, \gammaup\}} \Phi^*(\alpha, \gamma, \bb), \\
\sup_{\alpha \in [\alphalo, \alphaup], \gamma \in [\gammalo, \gammaup]} \Phi^*(1 - \alpha, \gamma, \bb) &\leq \max_{\alpha \in \{\alphalo, \alphaup\}, \gamma \in \{\gammalo, \gammaup\}} \Phi^*(1 - \alpha, \gamma, \bb).
\end{align*}

Plugging these into \eqref{eq:f-star-expanded}, we can upper bound $f^*$ by
\begin{align*}
&\max_{[\alphalo, \alphaup] \in \cI^{\alpha}} \max_{[\gammalo, \gammaup] \in \cI^{\gamma}_{\alphalo, \alphaup}} \sup_{\alpha \in [\alphalo, \alphaup], \gamma \in [\gammalo, \gammaup]} \Paren{G(\alpha, \gamma) + \Phi^*(\alpha, \gamma, \bb) + \Phi^*(1 - \alpha, \gamma, \obb)} \\
&\leq \max_{[\alphalo, \alphaup] \in \cI^{\alpha}} \max_{[\gammalo, \gammaup] \in \cI^{\gamma}_{\alphalo, \alphaup}} \Paren{G(\alphalo, \gammalo) + \max_{\alpha \in \{\alphalo, \alphaup\}, \gamma \in \{\gammalo, \gammaup\}} \Phi^*(\alpha, \gamma, \bb) + \max_{\alpha \in \{\alphalo, \alphaup\}, \gamma \in \{\gammalo, \gammaup\}} \Phi^*(1 - \alpha, \gamma, \bb)}.
\end{align*}
To certify $f^* < 0$, we simply compute the inner term above and take the maximum. We use linear discretization for $\alpha, \gamma$ where we divide each interval $(\ualpha, 1/2)$ and $[\alphalo \cdot \unu, \alphaup \cdot \nu]$ to $M$ subintervals of equal sizes.
Using $M = 200$, $\ualpha = 0.1$ from \Cref{thm:small-set}, and $\unu$ from \Cref{thm:bollobas-classic}, this numerical approach certifies that $f^*$ is at most $-0.009, -0.004$ and $-0.001$ for $\Delta = 4, 6, 8$ respectively. This concludes our proof.

\begin{remark}
We end this section by remarking that the reason that a discretization-based upper bound for $f^*$ works here is that $\bbXl(1 - \alpha, \gamma)$ is \emph{more restrictive} for the asymmetric case, since $\ox_{\Delta/2}$ must be zero here whereas this is allowed to be non-zero in the symmetric case. This creates a gap between the two cases, allowing us to use a simple discretization.
\end{remark}
\section{Conclusion and Research Direction}

In this work, we use a scoring-based tie breaking technique to provide improved lower bounds on expansions of random regular graphs with small even degrees. Our approach can be extended to higher degree; however, obtaining a formal bound becomes more challenging since formal inequalities (e.g. \Cref{lem:root-bound}) are harder to prove. Furthermore, based on our numerical approximations, it also seems that the improvement diminishes as $\Delta$ increases. Another interesting direction is to try to apply the technique to the case $\Delta = 3$. As mentioned earlier, Kostochka and Melnikov's bound~\cite{KM92} remains the best known lower bounds to date. In their proof, there is already a scoring-based tie-breaking in the form of ``marks'' between edge-vertex pairs. However, given that the marks are bounded (to be in $-1, 0, 1$), it seems plausible that a more sophisticated scoring-based tie-breaking may help. While this seems like a promising direction, we note that it is quite challenging since Kostochka and Melnikov's proof relies on a more complicated counting argument compared to the one used in our work (\Cref{lem:lampis-counting}).

\paragraph*{Acknowledgment.}
I am grateful to Ansh Nagda for discussions that eventually led to this work.

\bibliography{ref}
\bibliographystyle{alpha}

\end{document}